\documentclass[11pt,reqno,a4paper]{amsart}
\usepackage[utf8]{inputenc}
\usepackage{lmodern}
\usepackage{bold-extra}
\usepackage[all]{xy}
\usepackage{dsfont}
\usepackage{hyperref}
\usepackage{color}
\usepackage{esint}
\usepackage{mathtools}
\usepackage[T1]{fontenc}
\usepackage{textcomp}
\mathtoolsset{showonlyrefs}
\usepackage{slashed}
\usepackage{bigints}
\usepackage{psfrag}
\usepackage{tikz-cd}

\usepackage{scalerel}
\usepackage{amsmath,amssymb,amsthm,amsfonts,mathrsfs,bm}

\usepackage{exscale,relsize}
\usepackage{textgreek}
\usepackage{epsfig,graphics}

\usepackage[margin=0.9in]{geometry}

\newcommand{\Abracket}[1]{<#1>} 
\newcommand{\parenthesis}[1]{\left(#1\right)} 
\newcommand{\braces}[1]{\left\{#1\right\}} 

\newcommand{\R}{\mathbb{R}}

\newcommand{\dd}{\mathop{}\!\mathrm{d}}
\newcommand{\eps}{\varepsilon}

\newcommand{\ii}{\infty}
\newcommand{\graf}[1]{\left\{\begin{array}{ll}#1\end{array}\right.}

\newcommand{\dt}{\delta}

\newcommand{\al}{\alpha}
\newcommand{\be}{\beta}

\newcommand{\sg}{\sigma}

\newcommand{\om}{\Omega}
\newcommand{\lm}{\lambda}

\newcommand{\pa}{\partial}

\newcommand{\prl}{{\textbf{(}\mathbf P\textbf{)}_{\mathbf \lm}}}

\newcommand{\vxi}{\xi}

\newcommand{\sscp}{\scriptscriptstyle}
\newcommand{\dsp}{\displaystyle}
\newcommand{\noi}{\noindent}

\newcommand{\ssl}{\sscp \lm}
\newcommand{\ml}{m_{\sscp \lm}}

\newcommand{\all}{\al_{\ssl}}

\newcommand{\el}{E_{\ssl}}

\newcommand{\tl}{\tau_{\ssl}}

\newcommand{\pl}{\psi_{\sscp \lm}}

\newcommand{\rife}[1]{(\ref{#1})}
\newcommand{\ov}[1]{\overline{#1}}

\newcommand{\rl}{\mbox{\Large \textrho}_{\!\sscp \lm}}

\newcommand{\rlq}{V_{\lm}}

\newcommand{\ino}{\int_{\Omega}}

\newcommand{\inpo}{\int_{\pa \Omega}}

\DeclareMathOperator{\Eigen}{Eigen}

\DeclareMathOperator{\Span}{Span}

\DeclareMathOperator{\Spect}{Spect}

\newtheorem{thm}{Theorem}[section]

\newtheorem{lemma}[thm]{Lemma}
\newtheorem{prop}[thm]{Proposition}

\newtheorem{rmk}[thm]{Remark}

\title[Sharp spectral estimates]{Sharp spectral estimates for free boundary problems arising in plasma physics}

\thanks{2020 \textit{Mathematics Subject classification:} 35B09, 35B32, 35J61, 35Q99, 35R35, 82D10.}

\author[D. Bartolucci]{Daniele Bartolucci}
\address{Daniele Bartolucci, Department of Mathematics, University of Rome \emph{"Tor Vergata"}, Via della ricerca scientifica n.1, 00133 Roma.}
\email{bartoluc@mat.uniroma2.it}

\author[A. Jevnikar]{Aleks Jevnikar}
\address{Aleks Jevnikar, Department of Mathematics, Computer Science and Physics, University of Udine, Via delle Scienze 206, 33100 Udine, Italy.}
\email{aleks.jevnikar@uniud.it}

\author[J. Wei]{Juncheng Wei}
\address{Juncheng Wei, Department of Mathematics, Chinese University of Hong Kong, Shatin, Hong Kong.}
\email{wei@math.cuhk.edu.hk}

\author[R. Wu]{Ruijun Wu}
\address{Ruijun Wu, School of mathematics and statistics, Beijing Institute of Technology, Zhongguancun South Street No. 5, 100081 Beijing, P.R.China.}
\email{ruijun.wu@bit.edu.cn}

\thanks{ D.B.
is partially supported by the MIUR Excellence Department Project MatMod@TOV
awarded to the Department of Mathematics, University of Rome ``Tor Vergata'', by PRIN project 2022, ERC PE1\_11,
``{\em Variational and Analytical aspects of Geometric PDEs}'', by INdAM-GNAMPA project ``{\em Analisi qualitativa di problemi differenziali non lineari}'', by the E.P.G.P. Project sponsored by the University of Rome ``Tor Vergata'' and by INdAM-GNAMPA project ``{\em Analisi qualitativa di problemi differenziali non lineari}''.\\
 \indent A.J. is partially supported by INdAM-GNAMPA project ``{\em Analisi qualitativa di problemi differenziali non lineari}'' and PRIN Project 20227HX33Z ``{\em Pattern formation in nonlinear phenomena}''.\\
\indent J.W. is partially supported by GRF fund of RGC of Hong Kong entitled
``\emph{New frontiers in singularity formations of nonlinear partial differential equations}''.
}

\thanks{We would like to express our warmest thanks to P. Korman for very fruitful exchange of ideas about global bifurcation of Gelfand problems.}
\begin{document}
\numberwithin{equation}{section}

\begin{abstract}
    We derive a sharp spectral estimate for a superlinear free boundary problem arising in plasma physics. The semilinear equation is coupled with a constraint, which forces the analysis of a non-local eigenvalue equation. Consequently the corresponding first eigenvalue, say $\sg_1$, is not a standard one and it is shown that it cannot satisfy a general isoperimetric property of Faber-Krahn type. This motivates a careful analysis of the problem on balls in any dimension $N\geq 2$, where we prove that in fact $\sg_1$ is always positive. The implications about the uniqueness problem for the Emden equation are also discussed.
\end{abstract}

\maketitle
{\bf Keywords}: Spectral estimates, Free boundary equations, Emden problems, Uniqueness.

\section{Introduction}
Let~$\Omega\subset \R^N$,~$N\ge 2$ be a bounded domain of class~$C^{3}$ and set
$$
p_{_N}=\graf{+\ii,\;N=2 \\ \frac{N}{N-2},\; N\geq 3.}
$$
Motivated by the analysis of Gelfand problems we have recently pursued (\cite{BJWW2})
the constrained problem,
$$
\graf{-\Delta \psi =[\al+{\lm}\psi]_+^p\quad \mbox{in}\;\;\om\\ \\
\bigintss\limits_{\om} { [\al+{\lm}\psi]_+^p}=1\\ \\
\psi=0 \quad \mbox{on}\;\;\pa\om
}\qquad \prl
$$
for $1<p<p_{_N}$, $\lm\geq 0$ and the unknowns $\al\in\R$ and $\psi \in C^{2,r}_{0}(\ov{\om}\,)$, $r\in(0,1)$.
The underlying idea of \cite{BJ2022new, BJWW2, BJW2024sharp} is to gather enough information to follow
“the” curve of solutions of $\prl$ from $\lm=0$ and $\all=\al_0=1$, while keeping as far as possible in particular
the monotonicity of a suitably defined energy.

\bigskip

Let $q$ be the H\"older conjugate of~$p$, i.e.~$\frac{1}{p}+\frac{1}{q}=1$.
For a fixed $(\al_\lm,\psi_\lm)$ which solves $\prl$, we will adopt the following notations,
$$
 \rl= [\al_\lm+\lm \psi_\lm]_+^p \quad\mbox{\rm and }\quad {\rlq} =[\al_\lm+\lm \psi_\lm]_+^{p-1}=\rl^{\frac{1}{q}}.
$$
Problem $\prl$ is of independent interest due to its relevance to Tokamak's plasma physics (\cite{Freidberg2014IdealMHD, Kadomtsev1996nonlinear, Stacey2012fusion}),
which motivated the lot of work to understand existence, uniqueness, multiplicity of solutions and existence/non-existence/structure of the free boundary $\pa\{x\in\om\,:\,\al+\lm \psi>0\}$ of $\prl$,
see e.g.~\cite{BandleMarcus1982boundary,BandleSperb1983qualitative,BJ2022uniqueness,BJW2024sharp, Berestycki1980free, CaffarelliFriedman1980asymptotic, CaoPengYan2010multiplicity,FriedmanLiu1995free,LiPeng2015multipeak,Liu2014multiple,SuzukiTakahashi2016critical,Wei2001multiple,Wolansky1996critical} and references quoted therein.
In particular due to the fundamental results in \cite{Berestycki1980free}, it is well known that for any~$\lambda>0$ there exists at least one solution of~$\prl$, see also Section \ref{var-sol} below. The variational formulation adopted in \cite{Berestycki1980free} is closely related to minimal free energy principles with non-extensive entropy, where $-\lm$ plays the role of the inverse statistical temperature, see Appendix A in \cite{BJW2024sharp} and \cite{CLMP1995special} for further details.

On the other side, due to the constraint in $\prl$, we miss the classical notion of minimal solutions (\cite{CrandallRabinowitz1975continuation}) for this problem.
For example it is not true in general that any first eigenfunction of the associated linearized operator (see \rife{lineq0.1} below) must be simple and neither that if the first eigenvalue (say $\sg_1$) is positive then the maximum principle holds, see \cite{BJ2021global} for an example illustrating this point and
\cite{BJW2024spec} for the adaptation of the Courant nodal domain Theorem on balls.
As a consequence, among other things, unlike classical branches of Gelfand problems (\cite{CrandallRabinowitz1975continuation}) it is not at all clear that $\rl=[\all+\lm \pl]_+^p$ is pointwise
monotonic increasing as far as $\sg_1>0$.
After \cite{BJ2022new,BJ2022uniqueness,BJW2024sharp}, a workaround to this problem has been recently found in \cite{BJWW2}, yielding suitable monotonicity property through the bifurcation analysis of $\prl$, see Theorem C.

\bigskip

We follow the traditional convention to assume that the domain has unit volume, $|\Omega|=1$.
In particular $\mathbb{D}_{N}$ denotes a ball in $\R^N$ centered at the origin of unit volume $|\mathbb{D}_N|=1$, whose radius is denoted by~$R_N$.
For fixed $t\geq 1$ we define
$$
\Lambda(\om,t)=\inf\limits_{w\in H^1_0(\om), w\equiv \!\!\!\!/ \;0}
\dfrac{\ino |\nabla w|^2}{\left(\ino |w|^{t}\right)^{\frac2t}}\,,
$$
which is related to the best constant in the Sobolev embedding $\|w\|_t\leq \mathcal{C}_S(\om,t)\|\nabla w\|_2$ via
\begin{align}\nonumber
 \mathcal{C}_S(\om,t)=\Lambda^{-1/2}(\om,t) \quad \mbox{ for } t\in[1,2p_{_N}).
\end{align}

\noi {\bf Definition.} {\it We say that a solution $(\all,\pl)$ of {\rm $\prl$} is {\bf positive}
{\rm[}resp. {\bf non-negative}{\rm]} if~$\all> 0$ {\rm[}resp. $\all\geq 0${\rm]}}.\\

 It has been recently proved in \cite{BJW2024sharp} that the following quantity is well-defined and strictly positive:
$$
\lm_+(\om,p)=
\sup\left\{\delta>0\,:\, \all>0 \mbox{ for any solution } (\all,\pl)
\mbox{ of } \prl \mbox{ with }\lm<\delta\right\}.
$$

\noi {\bf Remark.} {\it We will use the fact that $\lm_+(\om,p)$ is always finite, as readily follows by the variational formulation of solutions of {\rm $\prl$}. 
See also{\rm ~\cite[Appendix A]{BJW2024sharp}}.}\\

\noi {\bf Definition.} {\it Let $(\all,\pl)$ be a solution of {\rm $\prl$}.
The {\bf energy} of $(\all,\pl)$ is
$$
\el=\frac12\ino |\nabla \pl|^2=\frac12 \ino  \mbox{\rm$\rl$}\pl.
$$
}

\noi Let us define
$$
\lm_0(\om,p):=\frac{1}{p}\Lambda(\om,2p),\quad  \om\subset \R^N,\; N\geq 2,
$$
and
$$
\lm_1(\om,p):=\left(\frac{8\pi}{p+1}\right)^{\frac{p-1}{2p}}\Lambda^{\frac{p+1}{2p}}(\om,p+1),\quad \om\subset \R^2.
$$

\noi The following results about positive solutions have been recently proved in~\cite{BJ2022uniqueness,BJW2024sharp}. \\

\noi{\bf Theorem A}{(\cite{BJ2022uniqueness, BJW2024sharp})}. {\it Let $N=2$ and $p\in [1,+\ii)$, then $\lm_+(\om,p)\geq \lm_1(\om,p)$
where the equality holds if and only if either $p=1$ or $\om=\mathbb{D}_2$.
Moreover, for any $\lm<\min\{\lm_0(\om,p),\lm_1(\om,p)\}$ there exists a unique solution to {\rm $\prl$},
defining a
real analytic curve of positive solutions which we denote by~$\mathcal{G}_0(\om)$,
such that $\al_0=1$, $2 E_0=2 E_0(\om)$ is the torsional rigidity of $\om$ and
$$
\frac{d \all}{d\lm}<0,\quad \frac{d \el}{d\lm}>0,\quad \forall (\all,\pl)\in\mathcal{G}_0(\om).
$$}

\noi We tend to believe that $\lm_1(\om,p)>\lm_0(\om,p)$, which is always true either for $\om=\mathbb{D}_2$ or for any $p$ large enough, see \cite{BJW2024sharp}.
It is not clear whether or not the sharp positivity estimates about $\lm_+(\om,p)$ obtained in Theorem B can be extended to higher dimensions $N\geq 3$.
For later purposes we also recall the following result in \cite{BJW2024sharp}. Let~$u_0$ be the unique (\cite{GNN1979symmetry}) solution of the Emden equation
\begin{align}\label{Emden0}
    \graf{-\Delta u_0 =u_0^p\quad \mbox{in}\;\;B_1\\
u_0>0 \quad \mbox{in}\;\;B_1\\
u_0=0 \quad \mbox{on}\;\;\pa B_1,
}
\end{align}
and set $$
I_p=\int\limits_{B_{1}} {\dsp {u}_0^p}.
$$
Here $R_N$ denotes the radius of $\mathbb{D}_{N}$ and it has been shown in \cite{BJW2024sharp} that,
\begin{align}\label{lmpN}
 \lm_+(\mathbb{D}_{N},p)={I_p^{1-\frac{1}{p}}}{R_N^{-\frac{N}{p}(1-\frac{p}{p_{_N}})}}.
\end{align}

\noi
Remark that for $N=2$ the uniqueness of solutions on balls was first proved in \cite{BandleSperb1983qualitative}. This being said, we have,\\

\noi{\bf Theorem B}{(\cite{BJWW2},\cite{BJW2024sharp})}. {\it Let $\om=\mathbb{D}_{N}\subset \R^N$, $N\geq 2$ and $p\in (1,p_{_N})$. Then, for any $\lm\geq 0$,
{\rm $\prl$} admits a unique solution. Moreover
$\all>0$ if and only if $\lm<\lm_+(\mathbb{D}_N,p)$, $\all$ is of class $C^2$ in $(0,+\ii)$ and $\frac{d \all}{d\lm}<0$ for any $\lm\geq 0$.}

\bigskip

\noi As a corollary of Theorem B, we have a new proof of the well known (\cite{GNN1979symmetry}) uniqueness of solutions of the Emden problem \eqref{Emden0}, see Section \ref{sec:Emden}.\\

\noi It can be shown (see \cite{BJ2022uniqueness}, \cite{BJWW2}), that the “natural” non-local
eigenvalue problem associated to the linearized equation for $\prl$ evaluated at a fixed solution $(\all,\pl)$ takes the form
$$
-\Delta \phi-\tl \rlq [\phi]_{\ssl}=\sg\rlq [\phi]_{\ssl},
$$
where, here and in the sequel, $\sg\in \R$ denotes an eigenvalue relative to $\phi$,
$$
\tl=\lm p,
$$
and
$$
\Abracket{\phi}_{\ssl}\coloneqq \dfrac{\ino \rlq  \phi}{\ino \rlq},
\qquad
[\phi]_{\ssl}\coloneqq  \phi \,- \Abracket{\phi}_{\ssl},\qquad \forall\,\phi \in L^2(\om).
$$
For any fixed solution $(\all,\pl)$, the weighted first eigenvalue is well defined (\cite{BJWW2}), which we will always denote by $\sg_1(\all,\pl)$.
Let us define
$$
\lm_*(\om,p)\coloneqq
\sup\left\{\ov{\lm}>0\,:\, \sg_1(\all,\pl)>0 \mbox{ for any solution } (\all,\pl)
\mbox{ of } \prl \mbox{ with }\lm<\ov{\lm}\right\}.
$$

\noi We recently proved the following  \\
\noi{\bf Theorem C}{(\cite{BJWW2})} {\it
 Let $N\geq 2$ and $p\in (1,p_N)$, then $\lm_*(\om,p)>\lm_0(\om,p)$. Moreover $\lm_+(\om,p)\geq \lm_0(\om,p)$ where the equality holds if and only if $p=1$
 and for any $\lm\in [0,\lm_*(\om,p))$, there exists a unique solution to {\rm $\prl$},
defining a $C^1$ curve of solutions, denoted by $\mathcal{G}_*(\om)$, such that $\al_0=1$, $2 E_0=2 E_0(\om)$ is the torsional rigidity of $\om$ and}
$$
\frac{d \all}{d\lm}<0,\quad \frac{d \el}{d\lm}>0,\quad \forall (\all,\pl)\in\mathcal{G}_*(\om).
$$

\noi As a corollary of Theorem C we obtained in \cite{BJWW2} a new way to describe the sharp qualitative behavior of the unbounded Rabinowitz continuum (\cite{Rabinowitz1971some}) of a class of Gelfand problems, including non minimal (\cite{CrandallRabinowitz1975continuation}) solutions. Indeed we reduced the problem of the existence of a {global strictly monotone} (in terms of $\el$) parametrization to an estimate about $\lm_*(\om,p)$ and $\lm_+(\om,p)$. Sufficient conditions to come up with this refined description are either
  $$
  \lm_*(\om,p)>\lm_+(\om,p),
  $$
  or
  $$
  \lm_*(\om,p)\geq \lm_+(\om,p)\quad \mbox{and}\quad \all\searrow 0^+\mbox{ as }\lm\nearrow \lm_+(\om,p),
  $$
  see \cite{BJWW2} for further details. Actually we proved in \cite{BJWW2} that
$\lm_*(\mathbb{D}_N,p)\geq \lm_+(\mathbb{D}_N,p)$ and that $\all\searrow 0^+$ as $\lm\nearrow  \lm_+(\mathbb{D}_N,p)$. As far as semilinear problems are concerned, spectral estimates of this sort are rather hard to establish,
known results generally relying on isoperimetric properties inherited by the underlying geometric structures, see \cite{BartolucciLin2014MathAnn}, \cite{ChangChenLin2003}, \cite{Suzuki1992AIHP} and references quoted therein. We come up here with a sharp refinement of the spectral estimate on $\mathbb{D}_N$. Indeed we have,
\begin{thm}\label{thm:lm*} Let $p\in (1,p_N)$, then
$$
  \lm_*(\mathbb{D}_N,p)=+\ii,
$$
that is, for any $\lm\geq 0$ and for any solution $(\all,\pl)$ of {\mbox{\rm $\prl$}} on the ball~$\mathbb{D}_N$ the inequality $\sg_1(\all,\pl)>0$ holds true.
\end{thm}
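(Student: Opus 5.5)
The plan is a continuation argument along the unique branch, combined with a separation of variables on the ball. By Theorem B, for every $\lm\geq 0$ problem $\prl$ on $\mathbb{D}_N$ has exactly one solution $(\all,\pl)$. By uniqueness and the moving plane method (\cite{GNN1979symmetry}) this solution is radial, and the solutions form a continuous curve $\lm\mapsto(\all,\pl)$. Along this curve, $\sg_1(\all,\pl)$ depends continuously on $\lm$, and by Theorem C (or by $\lm_*(\mathbb{D}_N,p)\geq\lm_+(\mathbb{D}_N,p)$ from \cite{BJWW2}) it is positive for small $\lm$. Arguing by contradiction, let $\ov{\lm}<+\ii$ be the first value with $\sg_1=0$. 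Then there is a nontrivial $\phi$ with $-\Delta\phi=\tl\rlq[\phi]_{\ssl}$ in $\mathbb{D}_N$ and $\phi=0$ on $\pa\mathbb{D}_N$. Since $\rlq$ is radial, I expand $\phi$ in spherical harmonics, $\phi=\sum_{k}\phi_k(r)Y_k(\theta)$. The projection $\Abracket{\cdot}_{\ssl}$ annihilates every mode with $k\geq1$, so only the radial mode sees the non-local term. I will show that neither the modes $k\geq 1$ nor the mode $k=0$ can carry a zero eigenvalue.

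\emph{Nonradial modes.} For $k\geq1$ the equation is the local ODE
$$
-\phi_k''-\tfrac{N-1}{r}\phi_k'+\tfrac{k(k+N-2)}{r^2}\phi_k=\tl\rlq\phi_k \quad\mbox{on } (0,R_N),\qquad \phi_k(R_N)=0 .
$$
Differentiating $-\Delta\pl=\rl$ shows that $w=-\pl'(r)$ solves the $k=1$ equation. Moreover $w>0$ on $(0,R_N]$, since $\pl$ is radially decreasing and Hopf's lemma gives $w(R_N)>0$. A Sturm comparison (or a Picone identity) between $w$ and $\phi_k$ on $(0,R_N)$ then shows the $k=1$ mode, and a fortiori every $k\geq1$ mode (whose potential term is larger), has strictly positive first eigenvalue. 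Hence $\phi_k\equiv0$ for $k\geq1$, and $\phi$ is radial.

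\emph{Radial mode.} This is where I expect the main obstacle, because of the non-local term. My first attempt is to relate a radial kernel to the derivative of the branch. Differentiating $\prl$ in $\lm$ (Theorem B gives $\all\in C^2$) shows that $\dot\psi=\frac{d\pl}{d\lm}$ solves a non-homogeneous version of the linearized equation:
$$
-\Delta\dot\psi=p\rlq\,(\dot\al+\lm\dot\psi+\pl), \qquad \ino \rlq\,(\dot\al+\lm\dot\psi+\pl)=0 .
$$
If the radial linearized operator had a kernel at $\ov{\lm}$, I would apply Fredholm alternative to this identity, testing against $\phi$, to get the solvability condition $\ino\rlq\,\phi\,[\pl]_{\ssl}=0$. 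I then expect two possibilities to be ruled out:
\begin{itemize}
\item \emph{A fold.} This is impossible, because the branch is a graph over $\lm$ with $\frac{d\all}{d\lm}<0$ by Theorem B.
\item \emph{A transversal crossing of $\sg_1$.} A Crandall--Rabinowitz or degree argument would then produce a secondary branch of radial solutions, contradicting uniqueness in Theorem B.
\end{itemize}
A more concrete alternative route for $\lm\geq\lm_+(\mathbb{D}_N,p)$ uses the structure of the solution there: $\all\leq0$, the support of $\rl$ is a ball $B_\rho$, and $\all+\lm\pl$ is an explicit rescaling of $u_0$ from \eqref{Emden0}. The radial problem on $B_\rho$ then reduces to the linearization of the Emden equation. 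That linearization is nondegenerate in the radial class, by the same uniqueness of $u_0$. The derivative in the scaling parameter $\rho$, namely $x\cdot\nabla u+\frac{2}{p-1}u$, gives an explicit radial function whose single sign change lets me run a Sturm argument for the constrained (mean-zero) problem.

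Either way, no zero eigenvalue can occur at $\ov{\lm}$, so $\sg_1>0$ for all $\lm\geq0$. This gives $\lm_*(\mathbb{D}_N,p)=+\ii$.
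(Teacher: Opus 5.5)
Your treatment of the nonradial modes is correct. It is a more elementary substitute, on balls, for the nodal-line theorem of \cite{DGP1999qualitative} used in Proposition~\ref{prop:simple}: the Wronskian identity with $w=-\psi_\lambda'$, which stays positive up to $r=R_N$, forces the $k=1$ profile to have zero Cauchy data at $R_N$. The gap is in the radial mode, where your dichotomy is resolved the wrong way round.

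\emph{The crossing case is vacuous.} Testing the eigenvalue equation against $\psi_\lambda$ gives \eqref{2907.6n}, which at $\sigma=0$ reads $\langle\phi\rangle_\lambda=m_\lambda\lambda(p-1)\langle[\phi]_\lambda\psi_\lambda\rangle_\lambda$. So your solvability condition $\int_\Omega V_\lambda\phi[\psi_\lambda]_\lambda=0$ is equivalent to $\langle\phi\rangle_\lambda=0$. A radial $\phi$ with zero mean solves $-\Delta\phi=\tau_\lambda V_\lambda\phi$ with zero total flux, so $\phi(R_N)=\phi'(R_N)=0$ and $\phi\equiv0$. In any event you verify neither the Crandall--Rabinowitz transversality nor a change of index. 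Since all solutions are variational (Remark~\ref{rem:var}), Theorem~\ref{prmin} gives $\sigma_1\ge0$, so $\sigma_1$ could only touch $0$.

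\emph{The fold-type case is dismissed incorrectly.} The only possible degeneracy is $\langle\phi\rangle_\lambda\neq0$. By Proposition~\ref{prop:tranverse} the solution set near $\bar\lambda$ is then a curve with $\lambda'(0)=0$, $\psi'(0)=\phi$ and $\alpha'(0)=-\bar\lambda\langle\phi\rangle_\lambda\neq0$. Nothing prevents $\lambda(s)$ from being strictly increasing through $s=0$. That is compatible with uniqueness, with the branch being a graph over $\lambda$, and even with $\alpha_\lambda$ strictly decreasing (the slope just becomes infinite).

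\emph{Differentiability of $\psi_\lambda$ is assumed.} Your derivation of the solvability condition presupposes that $\frac{d\psi_\lambda}{d\lambda}$ exists at $\bar\lambda$. Theorem B only gives regularity of $\alpha_\lambda$. In the fold-type case $\psi_\lambda$ is precisely not differentiable at $\bar\lambda$, so this cannot be assumed.

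What closes the argument in the paper is a regularity input on $\alpha_\lambda$. Once $\langle\phi\rangle_\lambda\neq0$ is known (Proposition~\ref{prop:simple}; for you, the flux argument above together with your nonradial step), the curve of Proposition~\ref{prop:tranverse} has $\lambda'(0)=0$ and $\alpha'(0)\neq0$. But uniqueness forces $\alpha(s)=\alpha_{\lambda(s)}$, and $\alpha_\lambda\in C^2(0,+\infty)$ by Lemma~\ref{lem:al}, so the chain rule would give $\alpha'(0)=0$. The paper phrases this as a blow-up of $\frac{d\alpha_\lambda}{d\lambda}$ along a sequence $\lambda_n\to\lambda_*^+$.

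Your alternative scaling route could also be made to work, and would avoid the bending lemma, but not for the reason you give: uniqueness of $u_0$ does not imply nondegeneracy, and no Sturm argument is needed. Set $v=\alpha_\lambda+\lambda\psi_\lambda$ and $z=x\cdot\nabla v+\frac{2}{p-1}v$. On the support $B_\rho$ of $V_\lambda$, both $[\phi]_\lambda$ and $z$ are radial regular solutions of the same ODE, so $[\phi]_\lambda=az$. By definition $\int_\Omega V_\lambda[\phi]_\lambda=0$. A Pohozaev computation gives $\int_{B_\rho}v^{p-1}z=\frac{N-(N-2)p}{p(p-1)}\int_{B_\rho}v^p$, plus a positive boundary term if $\alpha_\lambda>0$. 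This is positive because $p<p_{_N}$, so $a=0$. Then $\phi$ is constant on $B_\rho$ and harmonic on the annulus with zero radial derivative at $r=\rho$, hence $\phi\equiv0$. As written, your sketch neither handles the annulus nor ever uses $p<p_{_N}$, which signals that the key step is missing.
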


We hope this could be useful for later developments, the underlying idea being that to compare the value of $\sg_1(\all,\pl)$ for a fixed $\lm$
on a given domain $\om$ with that on $\mathbb{D}_N$. However there is no chance to rely on general
isoperimetric properties of Faber-Krahn type for $\sg_1$. Indeed, if one could prove that $\lm_*(\om,p)=+\ii$, then by Theorem C we could infer the uniqueness of solutions of the Emden problem \eqref{Emden0} on $\om$, see Section \ref{sec:Emden} for details. Therefore we know for sure that $\lm_*(\om,p)<+\ii$
for all those domains where uniqueness of the Emden problem fails, see for example \cite{EMP2}, \cite{LWZ} and references quoted therein.\\
This is in striking contrast with the analogue situation arising in the analysis of the first constrained eigenvalue (say again $\sg_1$) for mean field equations (\cite{BartolucciLin2014MathAnn}, \cite{ChangChenLin2003}, \cite{Suzuki1992AIHP}) where neither the geometry nor the topology of the domain affect the positivity  of $\sg_1$.\\
Actually the proof of Theorem \ref{thm:lm*} heavily relies on the uniqueness and regularity (with respect to $\lm$) of solutions on $\mathbb{D}_N$ (see Theorem B) which is not easy to extend to more general situations. This motivates the following

\smallskip

\noi{\bf OPEN PROBLEM} Assume $N\geq 2$, $1<p<p_{_N}$ and $\om$ convex, is it true that $\lm_*(\om,p)=+\ii$?

\smallskip

At last, by Propositions \ref{prop:simple} and \ref{prop:tranverse} below, if $\om$ is a two dimensional domain, symmetric and convex
with respect to coordinate directions, then we can prove that the curve of unique solutions of $\prl$ determined in Theorems A and C can be continued to a global (i.e. defined for any $\lm>0$) simple curve without bifurcation points. We conjecture that this curve contains in fact all the solutions of $\prl$, that
 $\all$ and $\el$ are monotonic all along the branch, and in particular that $\lm_*(\om,p)=+\ii$ for these domains.
Interestingly enough, in the same spirit of \cite{BJWW2}, this would yield a new description of the global solutions branch of Gelfand problems first pushed forward in \cite{Holzman1994uniqueness}.

\section{Preliminary results}
We will use several well known facts, see \cite{BJWW2}.
\begin{lemma}\label{lemE1}
    Let $p\in[1,p_{_N})$. For any $\ov{\lm}>0$ there exist $\ov{\al}=\ov{\al}(r,\om,\ov{\lm},p,N)>-\ii$ and $C_1=C_1(r,\om,\ov{\lm},p,N)<+\infty$ such that
    \begin{align}
        \all\geq \ov{\al}, \qquad \|\pl\|_{C^{2,r}_0(\ov{\om})}\leq C_1
    \end{align}
    for any solution~$(\all,\pl)$ of {\rm $\prl$} with $\lm\in [0,\ov{\lm}\,]$.
\end{lemma}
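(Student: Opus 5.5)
The strategy is to bound $\pl$ in $L^\ii$ uniformly, using only the constraint $\ino \rl=1$ and the subcriticality $p<p_{_N}$. The lower bound on $\all$ then follows by a support argument, and the $C^{2,r}$ bound by Schauder theory. Throughout, $(\all,\pl)$ is a solution of $\prl$ with $\lm\in[0,\ov{\lm}\,]$. Since $-\Delta \pl=\rl\geq 0$ and $\pl=0$ on $\pa\om$, the maximum principle gives $\pl\geq 0$.

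First I split according to the sign of $\all$. If $\all>0$, then $\all^p\leq \ino [\all+\lm\pl]_+^p=1$ (recall $|\om|=1$), so $0<\all\leq 1$ and $\rl\leq (1+\ov{\lm}\pl)^p$. If $\all\leq 0$, then $\rl\leq (\ov{\lm}\pl)^p$. In both cases
$$
0\leq \rl\leq C(1+\pl)^p,\qquad \ino \rl=1,
$$
with $C$ depending only on $\ov{\lm}$ and $p$.

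Next I run an $L^s$ bootstrap. Since $\rl$ is bounded in $L^1$, Stampacchia/Brezis–Strauss estimates give a bound for $\pl$ in $W^{1,s}_0$ for every $s<\frac{N}{N-1}$. Hence $\pl$ is bounded in $L^t$ for every $t<\frac{N}{N-2}$ (every $t<\ii$ if $N=2$). As $p<p_{_N}$, I can choose $t>p$, so $\rl$ is bounded in $L^{s_1}$ with $s_1=t/p>1$. Elliptic $W^{2,s}$ estimates and Sobolev embedding then put $\pl$ in $L^{Ns/(N-2s)}$ whenever $\rl\in L^s$ with $s<N/2$. This improves the exponent, $\frac{Ns}{(N-2s)p}>s$, precisely when $\frac{N}{N-2s}>p$. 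That inequality holds at $s=1$ because $p<p_{_N}$, and it persists as $s$ increases, so the gain per step is bounded below. After finitely many steps $s>N/2$, which yields $\|\pl\|_\ii\leq C_0(\om,\ov{\lm},p,N)$.

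For the lower bound on $\all$: if $\all< -\ov{\lm}C_0$, then $\all+\lm\pl<0$ everywhere, so $\rl\equiv 0$, contradicting $\ino\rl=1$. Therefore $\ov{\al}=-\ov{\lm}C_0$ works. For the $C^{2,r}$ bound, note that $|\all|$ and $\|\pl\|_\ii$ are now bounded. The $W^{2,s}$ estimate for large $s$ bounds $\pl$ in $C^{1,\gamma}$, and since $t\mapsto t_+^p$ is $C^1$ for $p>1$, $\rl$ is bounded in $C^{0,r}$. Schauder estimates on the $C^3$ domain then give $\|\pl\|_{C^{2,r}_0(\ov{\om})}\leq C_1$.

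The main obstacle is the bootstrap. One must check that the exponent gain does not degenerate, and this is exactly where $p<p_{_N}$ enters. I would handle it by the explicit monotonicity of $s\mapsto \frac{N}{N-2s}$ described above.
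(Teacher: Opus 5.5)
Your proof is correct. The paper does not prove this lemma itself but quotes it as a well-known fact from \cite{BJWW2}, and your chain is the standard argument for it: $\pl\geq 0$; $\all\leq 1$ when $\all>0$; the bound $[\all+\lm\pl]_+^p\leq C(1+\pl)^p$; the $L^1$-data estimate followed by the subcritical bootstrap, which gives $\|\pl\|_\infty\leq C_0$; the support argument, which gives $\all\geq -\ov{\lm}C_0$; and finally Schauder.

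The one thing to patch is that the lemma also covers $p=1$, where $t\mapsto t_+$ is not $C^1$. There it suffices that $t\mapsto t_+^p$ is locally Lipschitz for every $p\geq 1$, which is all you need to bound $[\all+\lm\pl]_+^p$ in $C^{0,r}(\ov{\om})$.
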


For fixed $\lm\geq 0$ and $p\in(1,p_{_N})$, let $(\all,\pl)$ be a solution of $\prl$, we recall the following notation from the introduction,
$$
\Abracket{\phi}_{\ssl}=\dfrac{\ino \rlq  \phi}{\ino \rlq}\quad \mbox{and }\quad
[\phi]_{\ssl}=\phi \,-\Abracket{\phi}_{\ssl},\quad \phi \in L^2(\om).
$$
Let us recall that we are always concerned with classical solutions and $p>1$, whence in particular
\begin{align}
    \rlq=[\all+\lm\pl]_+^{p-1}\in C^r(\ov{\om}),
\end{align}
for some $r\in (0,1)$. The support of $\rl$ is by definition $\ov{\om_+}$,
whence, if~$\all\geq 0$ then~$\om_+= \om$ while if~$\all<0$ then, since $\pl$ is continuous, we have $\om_+\Subset \om$.\\
\noi Consider the linear operator~$L_{\ssl}\colon C^{2,r}_0(\ov{\Omega})\to C^r(\ov{\Omega})$ defined by
\begin{align}\label{eLl}
    L_{\ssl}[\phi]=-\Delta \phi-\tl \rlq [\phi]_{\ssl}
\end{align}
with~$\tl=\lm p$.
We say that $\sg=\sg(\all,\pl)\in\R$ is an eigenvalue of $L_{\ssl}$ if the equation
\begin{align}\label{lineq0.1}
-\Delta \phi = (\tl+\sigma)\rlq [\phi]_{\ssl},
\end{align}
admits a non-trivial weak solution $\phi\in H^1_0(\om)$, and the corresponding eigenspace is denoted by $\Eigen(L_{\ssl};\sg)$.
Note that~$L_{\ssl}$ involves not only nonlocal terms but also a weight which may vanish on a large set. This situation has been
discussed in \cite{BJWW2} after \cite{BJ2022uniqueness} as we shortly illustrate below.\\
If we let~$G$ denote the Green's function for~$-\Delta$ with Dirichlet boundary condition
and apply it as a convolution kernel to~\eqref{lineq0.1}, then we see that,
\begin{align}
    \phi =  \frac{\tl+\sg}{\tl}\;  G* \parenthesis{\tl\rlq[\phi]_{\ssl}}.
\end{align}
We consider the operator~$T_{\ssl} \colon H^1_0(\Omega)\to H^1_0(\Omega)$, to be defined as follows,
\begin{align}
    T_{\ssl}(\phi) \coloneqq G* \parenthesis{\tl\rlq[\phi]_{\ssl}},
\end{align}
so that~\eqref{lineq0.1} is equivalent to
\begin{align}\label{eq:eigenvalue for T}
    T_{\ssl}(\phi)= \mu\phi, \qquad \mbox{ with } \mu = \frac{\tl}{\tl+\sg}.
\end{align}
In other words,~$\phi\in \Eigen(L_{\ssl};\sigma)$ iff~$\phi\in \Eigen(T_{\ssl};\frac{\tl}{\tl+\sg})$.
Thus it suffices to understand the eigenvalues and eigenfunctions of~$T_{\ssl}$, where the advantage is that it is a
linear self-adjoint compact operator on the Hilbert space~$H^1_0(\Omega)$ equipped with the inner product
\begin{align}
    \Abracket{\xi,\eta}_{H^1_0}=\int_{\Omega} \Abracket{\nabla\xi,\; \nabla\eta}, \qquad  \forall \xi,\eta\in H^1_0(\Omega),
\end{align}
see \cite{BJWW2} for details.
As a consequence, the spectrum of~$T_{\ssl}$,~$\Spect(T_{\ssl})$, consists of countably many real nonzero eigenvalues and perhaps also zero.
Each nonzero eigenvalue has finite multiplicity, and the nonzero eigenvalues have zero as the unique accumulation point.
From~\eqref{lineq0.1} it follows that~$\tl+\sigma\geq 0$, hence~$\Spect(T_{\ssl})\subset\R_+$.
We may thus list the nonzero eigenvalues as follows,
\begin{align}
    \mu_1\geq \mu_2\geq \mu_3\geq \cdots >0, \quad \lim_{j\to +\infty}\mu_j = 0,
\end{align}
and denote the corresponding eigenfunctions by~$\phi_j$,~$j\in\mathbb{N}$, which satisfy
\begin{align}
    \delta_{jk}=\Abracket{\phi_j,\phi_k}_{H^1_0}=\int_\Omega \Abracket{\nabla \phi_j, \;\nabla\phi_k}, \qquad \forall j,k\geq 1.
\end{align}
We collect the set of nonzero eigenvalues in~$\Spect(T_{\ssl})\setminus \braces{0}$ and denote
\begin{align}
    \mathcal{H}_1\coloneqq \ov{\oplus_{j\geq 1} \Eigen(T_{\ssl};\mu_j) } = \ov{\Span\braces{\phi_j\mid j\geq 1}}
\end{align}
where the closure is taken in the~$H^1_0$ norm. This is a closed subspace of~$H^1_0(\Omega)$.
\begin{lemma}{{\rm (}\cite{BJWW2}{\rm )}}\label{lem:spectral}$\,$
    \begin{itemize}
        \item[(i)] $0\in\Spect(T_{\ssl})$ iff~$\alpha<0$.
        \item[(ii)] If~$\alpha\geq 0$, then
                    \begin{align}
                        \Spect(T_{\ssl})=\braces{\mu_j\mid j\geq 1}\quad \mbox{ and }\quad
                        H^1_0(\Omega)=\mathcal{H}_1.
                    \end{align}
        \item[(iii)] If~$\alpha<0$, then
                \begin{align}
                    \Spect(T_{\ssl})=\braces{0}\cup \braces{\mu_j\mid j\geq 1}\quad  \mbox{ and }\quad
                    H^1_0(\Omega)= \Eigen(T_{\ssl};0)\oplus\mathcal{H}_1.
                \end{align}
                Moreover, the restriction of any function in~$\mathcal{H}_1$ to the set~$\Omega\setminus\ov{\Omega_+}$ is harmonic,
                \begin{align}\label{eq:og decomp}
                    \Eigen(T_{\ssl};0)=\braces{\phi\in H^1_0(\Omega)\mid \phi|_{\Omega_+} = \, <\phi>_{\lm} \mbox{a.e. in }\Omega_+},
                \end{align}
                and
                $$
                \phi\in \Eigen(T_{\ssl};0) \Rightarrow {\mbox{\rm $\rlq$}}[\phi]_{\ssl} = 0 \mbox{ a.e. in }\om.
                $$
    \end{itemize}
\end{lemma}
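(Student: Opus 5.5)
The plan is to work entirely in the Hilbert space $H^1_0(\Omega)$ with the Dirichlet inner product. First I will check that $T_{\ssl}$ is compact, self-adjoint and non-negative. For $\phi,\eta\in H^1_0(\Omega)$, testing the defining equation $-\Delta T_{\ssl}(\phi)=\tl\rlq[\phi]_{\ssl}$ against $\eta$ gives
$$
\Abracket{T_{\ssl}(\phi),\eta}_{H^1_0}=\tl\ino \rlq[\phi]_{\ssl}\,\eta .
$$
Since $\ino\rlq[\phi]_{\ssl}=0$, we may replace $\eta$ by $[\eta]_{\ssl}$, and obtain
$$
\Abracket{T_{\ssl}(\phi),\eta}_{H^1_0}=\tl\ino \rlq[\phi]_{\ssl}[\eta]_{\ssl}.
$$
This expression is symmetric in $\phi,\eta$ and non-negative when $\eta=\phi$. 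Compactness follows from $\rlq\in C^r(\ov{\om})$ together with the compact embedding $H^1_0\hookrightarrow L^2$. The spectral theorem then gives the orthogonal decomposition $H^1_0(\Omega)=\ker T_{\ssl}\oplus\mathcal{H}_1$, with all eigenvalues in $\R_+$. Points (ii) and (iii) thus reduce to identifying $\ker T_{\ssl}=\Eigen(T_{\ssl};0)$.

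Next I will characterize the kernel. If $T_{\ssl}(\phi)=0$, then $0=\Abracket{T_{\ssl}(\phi),\phi}_{H^1_0}=\tl\ino\rlq[\phi]_{\ssl}^2$. Assuming $\lm>0$, this gives $\rlq[\phi]_{\ssl}=0$ a.e. in $\om$, that is, $\phi=\Abracket{\phi}_{\ssl}$ a.e. on $\om_+=\{\rlq>0\}$. Conversely, any $\phi\in H^1_0$ that is a.e. constant on $\om_+$ satisfies $\Abracket{\phi}_{\ssl}=$ that constant. Hence $\rlq[\phi]_{\ssl}\equiv0$ and $T_{\ssl}(\phi)=0$. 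This proves the description \eqref{eq:og decomp} and the final implication of (iii).

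Then I will prove (i) by cases on the sign of $\alpha$.

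\emph{Case $\alpha\geq0$.} Here $\om_+=\om$: the solution $\pl$ is superharmonic and positive in $\om$ by the maximum principle, so $\all+\lm\pl>0$ in $\om$ (when $\all=0$ we use $\lm>0$). A kernel element is then a.e. constant on all of $\om$. An $H^1_0$ function that is constant a.e. has zero gradient and vanishing trace, so it is $0$. Thus $0\notin\Spect(T_{\ssl})$ as an eigenvalue. Because $T_{\ssl}$ is compact on an infinite-dimensional space, $0$ still lies in the spectrum as an accumulation point; so the equality $\Spect(T_{\ssl})=\{\mu_j\}$ in (ii) should be read in the paper's convention, where $\Spect$ lists eigenvalues. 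In any case, $H^1_0=\mathcal{H}_1$.

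\emph{Case $\alpha<0$.} Here $\om_+\Subset\om$, and the open set $\om\setminus\ov{\om_+}$ is nonempty. Any nonzero $\phi\in C^\infty_c(\om\setminus\ov{\om_+})$ vanishes on $\om_+$, hence lies in the kernel. So $0$ is a genuine eigenvalue, with infinite-dimensional eigenspace.

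Harmonicity of elements of $\mathcal{H}_1$ off $\ov{\om_+}$ is the last point. If $\eta\in C^\infty_c(\om\setminus\ov{\om_+})$, then $\eta\in\ker T_{\ssl}$, so $\eta$ is orthogonal to $\mathcal{H}_1$. This means $\ino\nabla\psi\cdot\nabla\eta=0$ for every $\psi\in\mathcal{H}_1$, so $\psi$ is weakly harmonic there, and harmonic by Weyl's lemma.

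The main obstacle I expect is the degenerate case $\lm=0$, where $\tl=0$ and $T_{\ssl}\equiv0$; the lemma is implicitly stated for $\lm>0$, and I will state that restriction explicitly. The other delicate point is justifying $\om_+=\om$ when $\alpha\ge0$, which relies on the strict positivity of $\pl$ from the strong maximum principle.
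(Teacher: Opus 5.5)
Your argument is correct. There is nothing in this paper to compare it with: the lemma is quoted from \cite{BJWW2} and no proof is given here. What you wrote is a complete, self-contained proof. The identity $\Abracket{T_{\ssl}\phi,\eta}_{H^1_0}=\tl\ino\rlq[\phi]_{\ssl}[\eta]_{\ssl}$ gives symmetry and non-negativity. The spectral theorem for compact self-adjoint operators gives $H^1_0(\om)=\ker T_{\ssl}\oplus\mathcal{H}_1$. The kernel is exactly the set where $\rlq[\phi]_{\ssl}=0$ a.e. Finally, the sign of $\all$ decides whether $\om_+=\om$ (strong maximum principle, so the kernel is trivial) or $\om_+\Subset\om$ (so $C^\infty_c(\om\setminus\ov{\om_+})$ lies in the kernel).

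Your harmonicity argument is efficient. Since $C^\infty_c(\om\setminus\ov{\om_+})\subset\Eigen(T_{\ssl};0)$ is orthogonal to $\mathcal{H}_1$, every element of $\mathcal{H}_1$ is weakly harmonic off $\ov{\om_+}$ in one step. The alternative is to use the equations $-\Delta\phi_j=\frac{\tl}{\mu_j}\rlq[\phi_j]_{\ssl}$ for each eigenfunction and then pass to $H^1_0$-limits.

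Your two caveats are genuine corrections to the statement as printed, not pedantry:
\begin{itemize}
\item At $\lm=0$ one has $\tl=0$, $T_{0}\equiv 0$ and $\al_0=1$, so (i) is false unless $\lm>0$ is assumed.
\item $T_{\ssl}$ is compact on an infinite-dimensional space, so $0$ always lies in its spectrum in the operator-theoretic sense. Items (i)--(ii) hold only if $\Spect$ means the set of eigenvalues. That is the convention the paper adopts just before the lemma ("countably many real nonzero eigenvalues and perhaps also zero").
\end{itemize}
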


\begin{rmk}\label{rem:orth}
    In the orthogonal decomposition, we will use
    \begin{align}
        P_0\colon H^1_0(\Omega)\to\Eigen(T_{\ssl};0) \quad  \mbox{ and }\quad
        P_1\colon H^1_0(\Omega)\to \mathcal{H}_1
    \end{align}
    to denote the corresponding orthogonal projections.
    More precisely, for any~$\psi\in H^1_0(\Omega)$, we have that
    \begin{align}
        \psi= \psi_0+\psi_1=P_0 \psi + P_1 \psi = P_0\psi + \sum_{j=1}^{+\infty} \beta_j\phi_j
    \end{align}
    with the Fourier coefficients {\rm
    \begin{align}
        \beta_j
        = \Abracket{\psi,\phi_j}_{H^1_0}
        =\ino \Abracket{\nabla\psi,\;\nabla\phi_j}
        =\ino \frac{\tl}{\mu_j} \rlq [\phi_j]_{\ssl} \psi
        =\ino \frac{\tl}{\mu_j} \rlq [\phi_j]_{\ssl} [\psi]_{\ssl}.
    \end{align}}

\end{rmk}

Recall that each~$\mu_j>0$ of~$T_{\ssl}$ corresponds to an eigenvalue~$\sg_j$ of~$L_{\ssl}$, related by
\begin{align}
    \mu_j=\frac{\tl}{\tl+\sg_j}, \quad \mbox{ that is, }\quad  \sg_j= \tl\parenthesis{\frac{1}{\mu_j}-1},
\end{align}
sharing the same eigenfunction~$\phi_j$. The zero eigenvalue of~$T_{\ssl}$ does not correspond to any eigenvalue of~$L_{\ssl}$.
Indeed,~$L_{\ssl}|_{\Eigen(T_{\ssl};0)}= (-\Delta)|_{\Eigen(T_{\ssl};0)}$, which gives the other part of the spectrum of~$L_{\ssl}$.\\
\noi At last remark that $0\in\Spect(L_{\ssl})$ iff~$1\in\Spect(T_{\ssl})$. Moreover we have
\begin{lemma}{{\rm (}\cite{BJWW2}{\rm )}}\label{lem:iso}
If $0\notin \Spect(L_{\ssl}) $, then~$L_{\ssl}\colon C^{2,r}_0(\ov{\Omega})\to C^r(\ov{\Omega})$ is an isomorphism.
\end{lemma}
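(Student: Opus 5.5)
The plan is to prove injectivity and surjectivity separately, and then get continuity of the inverse from the open mapping theorem. The case $\lm=0$ is classical: then $\tl=0$, so $L_{\ssl}=-\Delta$, which is an isomorphism $C^{2,r}_0(\ov{\om})\to C^r(\ov{\om})$ by Schauder theory. So assume $\lm>0$, i.e. $\tl>0$. Throughout I use that $\rlq\in C^r(\ov{\om})$ and that $\phi\mapsto \Abracket{\phi}_{\ssl}$ is a bounded linear functional on $L^2(\om)$.

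\emph{Injectivity.} Suppose $\phi\in C^{2,r}_0(\ov{\om})$ satisfies $L_{\ssl}[\phi]=0$. Then $\phi\in H^1_0(\om)$ is a weak, indeed classical, solution of \eqref{lineq0.1} with $\sg=0$. If $\phi\not\equiv 0$, then $0$ would be an eigenvalue of $L_{\ssl}$, contradicting the hypothesis $0\notin\Spect(L_{\ssl})$. Hence $\phi\equiv 0$.

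\emph{Surjectivity.} Fix $f\in C^r(\ov{\om})$ and look for $\phi$ with $-\Delta\phi=\tl\rlq[\phi]_{\ssl}+f$. Applying the Green convolution, this is equivalent in $H^1_0(\om)$ to
$$
(I-T_{\ssl})\phi=G*f .
$$
Here $G*f\in H^1_0(\om)$, and $T_{\ssl}$ is compact and self-adjoint on $H^1_0(\om)$. By \eqref{eq:eigenvalue for T} with $\sg=0$, i.e. $\mu=1$, the assumption $0\notin\Spect(L_{\ssl})$ is equivalent to $1\notin\Spect(T_{\ssl})$. The Fredholm alternative then gives a unique weak solution $\phi\in H^1_0(\om)$.

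\emph{Regularity bootstrap.} This is the only step requiring some care, since the right-hand side depends on $\phi$ itself. The nonlocal part is harmless: $\Abracket{\phi}_{\ssl}$ is just a constant, so $\tl\rlq[\phi]_{\ssl}=\tl\rlq\phi-\tl\Abracket{\phi}_{\ssl}\rlq$, and the second term lies in $C^r(\ov{\om})$. Since $\rlq$ is bounded, $\phi\in L^{t}$ implies that the right-hand side lies in $L^{t}$. The $W^{2,t}$ estimates for the Dirichlet Laplacian on the $C^3$ domain then give $\phi\in W^{2,t}$. Combining this with Sobolev embeddings and finitely many iterations, we obtain $\phi\in W^{2,t}$ for all $t<\infty$, hence $\phi\in C^{1,\gamma}(\ov{\om})$. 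Consequently $\tl\rlq[\phi]_{\ssl}+f\in C^r(\ov{\om})$, and Schauder estimates yield $\phi\in C^{2,r}_0(\ov{\om})$ with $L_{\ssl}[\phi]=f$.

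\emph{Conclusion.} The map $L_{\ssl}\colon C^{2,r}_0(\ov{\om})\to C^r(\ov{\om})$ is bounded, since $\|\rlq[\phi]_{\ssl}\|_{C^r}\le C\|\phi\|_{C^{2,r}}$. It is also linear and bijective between Banach spaces, so the open mapping theorem gives a bounded inverse, and $L_{\ssl}$ is an isomorphism.
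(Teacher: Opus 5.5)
Your proof is correct and is essentially the argument the paper relies on. The lemma is quoted from \cite{BJWW2}, and the same mechanism appears in the proof of Lemma~\ref{lem.crux}: rewrite $L_{\ssl}\phi=f$ as $(I-T_{\ssl})\phi=G*f$, use $0\notin\Spect(L_{\ssl})\iff 1\notin\Spect(T_{\ssl})$ with the Fredholm alternative for the compact self-adjoint $T_{\ssl}$ on $H^1_0(\om)$, then bootstrap to $C^{2,r}_0(\ov{\om})$ (and you rightly treat $\lm=0$ separately, where $T_{\ssl}\equiv 0$ and $\mu=\tl/(\tl+\sg)$ degenerates).
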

To describe the continuous branch of solutions $\lm \mapsto (\all,\pl)$, we employ the implicit function theorem.
Thus consider the map
\begin{align}\label{eq:Phi}
    \Phi\colon (-1,+\infty)\times \R \times C^{2,r}_0(\ov{\Omega}) &\to \R \times C^r(\ov{\Omega}), \\
            (\lambda,\alpha, \psi)& \mapsto \Phi(\lambda,\alpha, \psi )=(\Phi_1, \Phi_2),
\end{align}
with
\begin{align}\label{eF}
    \Phi_1(\lambda,\alpha,\psi)=  -1+\int_\Omega [\alpha+\lambda\psi]_+^p, & &
    \Phi_2(\lambda,\alpha,\psi)= -\Delta\psi-[\alpha+\lambda\psi]_+^p  \in C^r(\ov{\Omega}).
\end{align}
The preimage~$\Phi^{-1}(0,0)$ consists exactly of solutions of~$\prl$.
The differential w.r.t. $(\alpha,\psi)$ is given by
\begin{align}
    D_{(\alpha,\psi)} \Phi(\lambda,\alpha,\psi)[s,\phi]
    = \parenthesis{ p\int_\Omega [\alpha+\lambda\psi]_+^{p-1}(s+\lambda\phi), \;  -\Delta\phi- p[\alpha+\lambda\psi]_+^{p-1}(s+\lambda\phi) }.
\end{align}

\

Fix~$\lm>0$ and let~$(\all, \pl)$ be a solution of~$\prl$, so that~$\Phi(\lm,\all,\pl)=(0,0)$.
Using the notation~$\rl$ as above, we have
\begin{align}\label{eq:differential F}
    D_{(\alpha,\psi)} \Phi(\lm,\all,\pl)[s,\phi]
    = \parenthesis{ p\int_\Omega \rlq (s+\lambda\phi), \;  -\Delta\phi-p\rlq (s+\lambda\phi)}.
\end{align}

We have the following local~$C^1$ regularity of the branch of solutions, see \cite{BJWW2}.

\begin{lemma}\label{lem1.1}
Let $(\al_{\sscp \lm_0},\psi_{\sscp \lm_0})$ be a solution of {\rm $\prl$} with $\lm=\lm_0\geq 0$.
If $0\notin\Spect(L_{\sscp \lm_0})$, then:
\begin{itemize}
    \item[(i)] $D_{(\al,\psi)}\Phi(\lm_0, \al_{\sscp \lm_0}, \psi_{\sscp \lm_0})$ is an isomorphism;
    \item[(ii)] There exists an open neighborhood $\mathcal{U}$ of $(\lm_0,\al_{\sscp \lm_0},\psi_{\sscp \lm_0})$ such that the set of solutions of {\rm $\prl$} in $\mathcal{U}$ is a $C^1$ curve of solutions $J\ni\lm\mapsto (\all,\pl)\in B$, for suitable neighborhoods $J$ of $\lm_0$ and $B$ of $(\al_{\sscp \lm_0},\psi_{\sscp \lm_0})$ in $\R\times C^{2,r}_{0}(\ov{\om}\,)$.
\end{itemize}

\end{lemma}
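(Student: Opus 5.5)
The plan is to prove (i) by reducing the invertibility of the differential \eqref{eq:differential F} to the invertibility of $L_{\sscp \lm_0}$, which holds by Lemma \ref{lem:iso}. Then (ii) follows from the implicit function theorem applied to the map $\Phi$ in \eqref{eq:Phi}.

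For (i), fix $(h,f)\in\R\times C^r(\ov{\om})$. We must solve uniquely for $(s,\phi)\in\R\times C^{2,r}_0(\ov{\om})$ the system
\begin{align}
 p\ino V_{\lm_0}(s+\lm_0\phi)=h,\qquad -\Delta\phi-pV_{\lm_0}(s+\lm_0\phi)=f,
\end{align}
where $V_{\lm_0}$ is the weight $\rlq$ evaluated at $\lm=\lm_0$. Set $m=\ino V_{\lm_0}$. It is positive, since $V_{\lm_0}\ge0$ and $\rl=V_{\lm_0}^{q}$ has unit integral. The first equation gives $s=\frac{h}{pm}-\lm_0\Abracket{\phi}_{\sscp \lm_0}$, so $s+\lm_0\phi=\lm_0[\phi]_{\sscp \lm_0}+\frac{h}{pm}$. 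Substituting into the second equation yields
\begin{align}
 L_{\sscp \lm_0}[\phi]=-\Delta\phi-\tau_{\sscp \lm_0}V_{\lm_0}[\phi]_{\sscp \lm_0}=f+\frac{h}{m}V_{\lm_0},
\end{align}
with $\tau_{\sscp \lm_0}=\lm_0 p$. Conversely, any $\phi$ solving this equation, paired with $s$ defined as above, solves the system. The right-hand side lies in $C^r(\ov{\om})$ because $V_{\lm_0}\in C^r(\ov{\om})$. Since $0\notin\Spect(L_{\sscp \lm_0})$, Lemma \ref{lem:iso} gives a unique $\phi\in C^{2,r}_0(\ov{\om})$, and hence a unique $s$. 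So $D_{(\al,\psi)}\Phi$ is a continuous bijection between Banach spaces, and the open mapping theorem makes it an isomorphism. The case $\lm_0=0$ is trivial: then $\tau_{\sscp \lm_0}=0$ and $L_{\sscp \lm_0}=-\Delta$.

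For (ii), the main point, and the expected obstacle, is that $\Phi$ is $C^1$ near $(\lm_0,\al_{\sscp \lm_0},\psi_{\sscp \lm_0})$ despite the non-smooth positive part. Because $p>1$, the map $t\mapsto[t]_+^p$ is $C^1$ with derivative $p[t]_+^{p-1}$, which is Hölder continuous of exponent $\min\{p-1,1\}$. Taking $r\le\min\{p-1,1\}$ if necessary, the Nemytskii operator $(\lm,\al,\psi)\mapsto[\al+\lm\psi]_+^p$ is $C^1$ from $\R\times\R\times C^{2,r}_0$ into $C^r$. The argument is to bound the remainder $[a+b]_+^p-[a]_+^p-p[a]_+^{p-1}b$ in $C^r$ by $o(\|b\|_{C^{1}})$ using the uniform Hölder continuity of the derivative, and to check continuity of the derivative in the same way. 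If needed, the derivative can be taken with values in a slightly smaller Hölder space first and the target exponent adjusted, as in \cite{BJWW2}. $\Phi_1$ is then $C^1$ as the composition with a bounded integral.

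With $\Phi$ of class $C^1$ and (i) in hand, the implicit function theorem provides neighborhoods $J$ of $\lm_0$ and $B$ of $(\al_{\sscp \lm_0},\psi_{\sscp \lm_0})$, and a $C^1$ map $J\ni\lm\mapsto(\all,\pl)\in B$. Its graph is exactly $\Phi^{-1}(0,0)\cap(J\times B)$, so one can take $\mathcal U=J\times B$. Since $\Phi^{-1}(0,0)$ consists precisely of the solutions of $\prl$, this proves (ii).
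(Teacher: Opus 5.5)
Your argument is correct and matches the standard proof the paper relies on: the paper defers this lemma to \cite{BJWW2}, and the same elimination step opens the proof of Lemma~\ref{lem.crux}. You solve the scalar equation for $s$, reduce to $L_{\sscp \lm_0}[\phi]=f+\frac{h}{m}V_{\lm_0}$, invoke Lemma~\ref{lem:iso}, and conclude with the implicit function theorem.

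One small point to tighten: for $1<p<2$, the Nemytskii map $\psi\mapsto[\al+\lm\psi]_+^p$ is $C^1$ into $C^r(\ov{\om})$ only for $r<p-1$ strictly. At $r=p-1$ the remainder is merely $O(\|v\|)$ across a non-degenerate free boundary. This is harmless, since $r$ may be taken small.
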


At last we recall the following facts from \cite{BJWW2}.
\begin{lemma}{{\rm (}\cite{BJWW2}{\rm )}}\label{lem:al}
Let $\om=\mathbb{D}_N$, $N\geq 2$, $p\in (1,p_N)$ and for any $\lm\geq 0$ let
$(\all, \pl)$ be the unique solution of {\mbox{\rm $\prl$}} as determined by Theorem B.
Then $\all\in C^{2}(0,+\ii)$, $\frac{d \all}{d\lm}<0$ in $(0,+\ii)$ and
$$
\frac{d \all}{d\lm}=<\pl>_{\ssl}+\lm<\frac{d \pl}{d\lm}>_{\ssl}
$$
\end{lemma}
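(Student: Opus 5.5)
The claims split into two parts. The regularity $\all\in C^2(0,+\ii)$ and the strict monotonicity $\frac{d\all}{d\lm}<0$ are already part of Theorem B, so the only genuinely new content is the derivative formula. To reach it I plan to (a) justify that the whole pair $\lm\mapsto(\all,\pl)$ is differentiable in $\lm$, and (b) differentiate the constraint in $\prl$.

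\emph{Step (a): differentiability of $\pl$ in $\lm$.} At any $\lm_0$ with $0\notin\Spect(L_{\sscp \lm_0})$, Lemma \ref{lem1.1} yields a local $C^1$ curve of solutions. By the uniqueness in Theorem B, this curve coincides with the branch $\lm\mapsto(\all,\pl)$, so $\pl$ is $C^1$ in $C^{2,r}_0$ near $\lm_0$.

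To cover every $\lm$, including possible degenerate values, I would instead use the radial structure. By uniqueness, $\pl$ is radial, and $w=\all+\lm\pl$ solves the ODE $-w''-\frac{N-1}{r}w'=\lm w_+^p$ with $w'(0)=0$. Let $v$ be the solution with $v(0)=1$. Then $w(r)=a\,v(\lm^{1/2}a^{(p-1)/2}r)$ with $a=w(0)>0$. The boundary condition $w(R_N)=\all$ and the constraint $\ino w_+^p=1$ become two smooth scalar equations in the unknowns $(a,\all)$, with $\lm$ as a parameter. Their solution set is the graph given by Theorem B, and $\all$ is $C^2$ there. From this I would read off that $a=a(\lm)$ is $C^1$, and hence that $\pl=(w-\all)/\lm$ is $C^1$ in $\lm$ into $C^{2,r}_0$ for $\lm>0$. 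I expect this to be the main obstacle: the free-boundary case $\all<0$, where $w_+$ is only Lipschitz at the free boundary. The key point to check is that differentiating $\ino w_+^p$ is harmless because $p>1$, so $\rl$ is $C^1$ in $w$ with derivative $p\rlq$.

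\emph{Step (b): differentiate the constraint.} Set $\dot{}=\frac{d}{d\lm}$. Differentiating $\ino[\all+\lm\pl]_+^p=1$, which is legitimate since $p>1$, gives
$$
p\ino \rlq\big(\dot\al_{\ssl}+\pl+\lm\dot\psi_{\ssl}\big)=0 .
$$
Dividing by $p\ino\rlq>0$ expresses $\dot\al_{\ssl}$ as the weighted average $\Abracket{\pl}_{\ssl}+\lm\Abracket{\dot\psi_{\ssl}}_{\ssl}$. The identity I actually obtain this way is $\dot\al_{\ssl}=-\big(\Abracket{\pl}_{\ssl}+\lm\Abracket{\dot\psi_{\ssl}}_{\ssl}\big)$, that is, $\Abracket{\frac{d}{d\lm}(\all+\lm\pl)}_{\ssl}=0$. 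This sign is the one consistent with $\dot\al_{\ssl}<0$ for small $\lm$, because $\pl>0$ there. The formula as printed in the statement differs by this overall sign, so I would re-check the convention against \cite{BJWW2} before adopting it, and otherwise state it in the form derived here. Finally, $\dot\al_{\ssl}<0$ is taken from Theorem B. Alternatively, it can be deduced from $a(\lm)$ via the scaling relation of step (a).
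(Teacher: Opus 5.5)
You are right about the sign, and most of the proposal stands; the gap is in the global part of step (a). The paper gives no proof of this lemma. It is quoted from \cite{BJWW2}, and its first two assertions are already contained in Theorem B, so its only content is the identity. Differentiating the constraint is the right way to get it, and your sign is the correct one: the paper itself uses $\frac{d\all}{d\lm}=-\big(\Abracket{\pl}_{\ssl}+\lm\Abracket{\frac{d\pl}{d\lm}}_{\ssl}\big)$ in \eqref{flexal}, so the printed statement has a sign typo. Your local argument at nondegenerate $\lm$ (Lemma \ref{lem1.1} plus uniqueness) is also fine.

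\textbf{The gap.} From $\all\in C^2$ you cannot read off $a\in C^1$. At fixed $\lm$, the $a$-derivative of the boundary relation $\all=a\,v(\rho)$, with $\rho=\lm^{1/2}a^{(p-1)/2}R_N$, is $v(\rho)+\frac{p-1}{2}\rho v'(\rho)$. This equals $1$ at $\rho=0$ and is negative at the first zero of $v$, so it vanishes somewhere, and at that point the regularity of $\all$ tells you nothing about $a$. For your $2\times 2$ system in $(a,\all)$, the Jacobian is, up to sign, just the $a$-derivative of the constraint. Its nonvanishing is therefore the one thing step (a) needs, and you never address it. Without it the branch may fail to be differentiable in $\lm$, which is exactly the case you brought the radial argument in for. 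The free-boundary Lipschitz issue you single out is not the obstacle, since $t\mapsto t_+^p$ is $C^1$ for $p>1$.

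\textbf{The fix.} It takes one line, and it is where $p<p_{_N}$ enters. Write $M(\rho)=\int_{B_\rho}v_+^p$, $k=\rho/R_N$ and $k^2=\lm a^{p-1}$. Then $\ino w_+^p=a^pk^{-N}M(\rho)$, and the constraint becomes $\lm^{\frac{p}{p-1}}=R_N^{-\gamma}\rho^{\gamma}M(\rho)$ with $\gamma=\frac{2p}{p-1}-N>0$. Since $M>0$ and $M'\geq 0$, the right side has derivative $R_N^{-\gamma}(\gamma\rho^{\gamma-1}M+\rho^{\gamma}M')>0$. Hence $\rho$, $a=(\rho^2R_N^{-2}\lm^{-1})^{\frac{1}{p-1}}$, $\all=a\,v(\rho)$ and $\pl=a\,(v(k|x|)-v(\rho))/\lm$ are all $C^1$ in $\lm$ for every $\lm>0$.

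Be aware that this proves much more than the lemma. With $\lm$ itself as the parameter, the computation behind \eqref{2907.10} gives $\sg_k\ino\rlq[\phi_k]_{\ssl}\frac{d\pl}{d\lm}=p\ino\rlq[\phi_k]_{\ssl}\pl$. At a degenerate point this contradicts \eqref{2907.5} (with Proposition \ref{prop:simple}). Together with Theorem C and continuity of $\sg_1$, the repaired step (a) therefore already yields Theorem \ref{thm:lm*}.

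For the lemma as the paper actually uses it, none of this is needed. \eqref{flexal} evaluates the identity only at $\lm_n=\lm(s_n)$ with $\lm'(s_n)>0$. There $\frac{d\pl}{d\lm}=\psi'(s_n)/\lm'(s_n)$ exists by Proposition \ref{prop:tranverse} and uniqueness, and differentiating the constraint at those points is all it takes.
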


\begin{prop}\label{prop:simple}
    Let $\om=B_R(0)\subset \R^N$, $N\geq 2$ or either $\om\subset \R^2$ be symmetric and convex with respect to the coordinate directions $x_i$, $i=1,2$.
    Let $(\all,\pl)$ be a solution of {\rm$\prl$} with~$\lm>0$.
Suppose that $\sg_k=\sg_k(\all,\pl)=0$ and let $\phi_k$ be any
corresponding eigenfunction. Then:
\begin{itemize}
    \item[(i)] $\Abracket{\phi_k}_{\ssl}\neq 0$;
    \item[(ii)] $\sg_k(\all,\pl)$ is simple, that is, it admits at most one linearly independent eigenfunction.
\end{itemize}
\end{prop}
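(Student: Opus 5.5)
The plan is to prove (i) by contradiction, reducing it to a nondegeneracy statement for the local linearized operator, and then to obtain (ii) from (i) by a one-line linear algebra argument. Part (ii) is immediate once (i) is known. If $\phi_k,\tilde\phi_k$ were two linearly independent eigenfunctions for $\sg_k=0$, then by (i) both $\Abracket{\phi_k}_{\ssl}$ and $\Abracket{\tilde\phi_k}_{\ssl}$ are nonzero. The combination $\Abracket{\tilde\phi_k}_{\ssl}\phi_k-\Abracket{\phi_k}_{\ssl}\tilde\phi_k$ is then a nontrivial eigenfunction for $\sg_k=0$ with vanishing weighted mean, contradicting (i).

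For (i), suppose $\Abracket{\phi}_{\ssl}=0$ for some $\phi\in\Eigen(L_{\ssl};0)$. Then $[\phi]_{\ssl}=\phi$, so $\phi$ solves the \emph{local} linearized problem
$$
-\Delta\phi=\tl\rlq\phi \ \mbox{ in }\om,\qquad \phi=0 \ \mbox{ on }\pa\om,\qquad \ino\rlq\phi=0 .
$$
Set $w=\all+\lm\pl$, so that $-\Delta w=\lm w_+^p$ and $w=\all$ on $\pa\om$. The right-hand side $f(w)=\lm w_+^p$ is $C^1$ since $p>1$, and $w-\all>0$ in $\om$. Hence the Gidas--Ni--Nirenberg moving plane method gives the following: $w$ is symmetric in each $x_i$, and $\pa_{x_i}w<0$ for $x_i>0$, both on $B_R$ and on the planar domains symmetric and convex in the coordinate directions.

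Decompose $\phi$ into parts odd or even in each $x_i$; each part solves the same local linearized problem. For a part odd in $x_1$, restrict to the half domain $\{x_1>0\}$, where it has zero Dirichlet data. There $-\pa_{x_1}w>0$ solves the same equation $-\Delta z=\tl\rlq z$ with nonnegative (not identically zero) boundary data. This forces the first Dirichlet eigenvalue of $-\Delta-\tl\rlq$ on the half domain to be positive, so the odd part vanishes. Thus $\phi$ is even in every coordinate; on $B_R$ I expect it to be radial, applying the same argument to every reflection hyperplane through the origin.

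In the radial case, the regular solutions of the linearized ODE form a one-dimensional space. This space is spanned by the scaling generator
$$
z=x\cdot\nabla w+\tfrac{2}{p-1}\,w ,
$$
and a direct computation gives $-\Delta z=p\lm w_+^{p-1}z=\tl\rlq z$. So $\phi=cz$ with $c\neq0$, and it remains to rule out the two conditions $z|_{\pa\om}=0$ and $\ino\rlq z=0$ holding simultaneously. By Green's identity between $\phi$ and $z$, and between $\pl$ and $\phi$, I obtain $\ino\rlq\pl\phi=0$, $\ino\nabla\pl\cdot\nabla\phi=0$ and $\inpo (x\cdot\nu)\,\pa_\nu\pl\,\pa_\nu\phi=0$. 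Here the second identity uses $p>1$ and $\lm>0$. Combined with $z|_{\pa B_R}=R\,w'(R)+\frac{2\all}{p-1}=0$, these identities should yield a Pohozaev-type contradiction. The alternative is to use the radial uniqueness and the $C^1$ branch of Theorem B and Lemma \ref{lem:al}: a zero-mean kernel element $(0,\phi)$ lies in the kernel of $D_{(\al,\psi)}\Phi$, and I would try to show this is incompatible with the transversality $\frac{d\all}{d\lm}<0$.

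The hardest step is this final exclusion of the even kernel element. On the ball it reduces to the explicit scaling function $z$, and the difficulty is a sign computation for $\ino\rlq z$ when $z(R)=0$. For the planar symmetric convex domains there is no ODE reduction. There I would rely on the known Lin/Dancer-type argument for even kernel elements of positive solutions on such domains: the nodal lines of an even kernel function must meet $\pa\om$, and this is incompatible with the monotonicity of $w$ along each axis. The extra constraint $\ino\rlq\phi=0$ serves to exclude the remaining degenerate configuration.
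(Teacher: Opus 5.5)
\textbf{There is a genuine gap in part (i).} Part (ii), and your reduction of (i) to the local problem $-\Delta\phi=\tl\rlq\phi$ with $\phi=0$ on $\pa\om$ and $\ino\rlq\phi=0$, are exactly as in the paper. The decisive step of (i), however, is missing.

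You never turn the constraint into boundary information. Integrating the equation gives $\inpo\pa_\nu\phi=-\tl\ino\rlq\phi=0$. The paper combines this with Theorem 3.1 of \cite{DGP1999qualitative}, applied to $-\Delta\pl=f_{\ssl}(\pl)$ with $f_{\ssl}(t)=[\all+\lm t]_+^p$. Here $f_{\ssl}$ is $C^1$, $f_{\ssl}(0)\geq 0$ and $f_{\ssl}'(\pl)=\tl\rlq$. That theorem says the nodal set of any Dirichlet solution of the linearized equation does \emph{not} meet $\pa\om$; this is where symmetry and convexity are used when $N=2$. Hence $\phi$ has one sign near $\pa\om$, Hopf's lemma makes $\pa_\nu\phi$ strictly signed, and this contradicts $\inpo\pa_\nu\phi=0$.

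In your planar case there is no actual argument. The fact you attribute to a ``Lin/Dancer-type argument'' (that nodal lines of an even kernel function \emph{must} meet $\pa\om$) is the opposite of what is true and needed. Saying that the constraint ``excludes the remaining degenerate configuration'' is not a proof. Note also that $-\Delta-\tl\rlq$ can genuinely have a kernel: on the ball with $\all>0$ this happens at the turning point of the associated Gelfand branch $-\Delta u=\mu(1+u)^p$. So the whole content of (i) is showing that such kernel elements have nonzero weighted mean, and that is exactly what you leave undone.

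On the ball your route can be closed, and far more cheaply than through the scaling function $z$ and a Pohozaev computation. Your odd-part elimination, comparing with $-\pa_e w>0$ on half-balls, is correct and makes $\phi$ radial. The flux identity then gives $\phi'(R)=0$. Together with $\phi(R)=0$, uniqueness for the linear ODE $\phi''+\frac{N-1}{r}\phi'+\tl\rlq\phi=0$ at the regular point $r=R$ forces $\phi\equiv 0$. Your own third identity $\inpo(x\cdot\nu)\,\pa_\nu\pl\,\pa_\nu\phi=0$ already yields $\phi'(R)=0$, since $\pa_\nu\pl$ is a negative constant on $\pa B_R$.

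Your alternative via $\frac{d\all}{d\lm}<0$ does not work as stated. A zero-mean kernel element $(0,\phi)$ of $D_{(\al,\psi)}\Phi$ is not by itself incompatible with a $C^1$ function $\all$ with negative derivative. The paper's blow-up of $\frac{d\all}{d\lm}$ uses precisely $\Abracket{\phi_k}_{\ssl}\neq 0$. In any case that route only concerns balls.
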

\begin{proof}
The proof of Proposition \ref{prop:simple} is provided in \cite{BJWW2} for positive solutions
but it is readily seen that the same argument works fine for any solution. We report it here for the sake of completeness.\\
Concerning $(i)$ we argue by contradiction and assume that $<\phi_k>_{\ssl}= 0$.
In this case $\phi_k$ would be a classical solution of,
\begin{equation}\label{0107.1}
\graf{-\Delta \phi_k =\tl \rlq \phi_k \quad \mbox{in}\;\;\om,\\ \\
\phi_k=0 \quad \mbox{on}\;\;\pa\om,
}
\end{equation}
satisfying $\ino \rlq \phi_k=0$.
However, by Theorem 3.1 in \cite{DGP1999qualitative} (which relies for $N=2$ on the symmetry and convexity properties of $\om$) the
nodal line of any solution of \eqref{0107.1} cannot intersect the boundary. Here one is also using the fact that $\pl\geq 0$ is
a solution of $\prl$ and that
$\rl=f_{\ssl}(\pl)$, where $f_{\ssl}:[0,+\ii)\to [0,+\ii)$ is a $C^1$ function and  $f_{\ssl}(0)\geq 0$.
Therefore, $\phi_k$ has a fixed sign in a small enough neighborhood of the boundary and then, in particular,
we can assume without loss of generality that
$\pa_{\nu}\phi_k=\frac{\pa \phi_k}{\pa \nu}<0$ on $\pa \om$,
where $\nu$ denotes the exterior unit normal.
On the other side, by \eqref{0107.1} and $<\phi_k>_{\ssl}= 0$, we see that $\inpo \pa_{\nu}\phi_k=0$,
which is the desired contradiction.\\
Concerning $(ii)$, we have from $(i)$ that any eigenfunction $\phi_k$ satisfies $<\phi_{k}>_{\ssl}\neq 0$.
If there were more than one such eigenfunctions, say $\phi_{k,\ell}$, $\ell=1,2$, then, putting
$a_\ell:=<\phi_{k,\ell}>_{\ssl}\neq 0$, $\ell=1,2$, we would find that
$\phi=\phi_{k,1}-\frac{a_1}{a_2}\phi_{k,2}$ would be an eigenfunction of
$\sg_k$ satisfying $<\phi>_{\ssl}=0$, which is a contradiction.
\end{proof}

\section{Variational Solutions}\label{var-sol}
\noi We discuss a spectral estimate for variational solutions. In fact, solutions of $\prl$ can be found (\cite{Berestycki1980free})
minimizing the free energy $\mathcal{F}_{\lm}(\rho)$ which, takes the form,
\begin{equation}\label{var:1}
\inf \{\mathcal{F}_{\lm}(\rho), \rho \in \mathcal{P}\},\quad \mathcal{P}=
\left\{\rho\in L^{1+\frac1p}(\om)\,|\,\ino \rho=1,\;\; \rho\geq 0\;\mbox{a.e. in}\;\om \right\}
\end{equation}
$$
\mathcal{F}_{\lm}(\rho)=
{\scriptstyle \frac{p}{p+1}}\ino (\rho)^{1+\frac{1}{p}}-\frac\lm 2 \ino \rho G[\rho].
$$
It has been proved in \cite{Berestycki1980free} that for any $\lm$ there exists at
least one variational solution (although we are taking for granted here a slightly modified approach based on non-convex optimization principles,
see p.422 in \cite{Berestycki1980free}).\\

\noi Denoting by $\alpha$ the Lagrange multiplier relative to the mass constraint, solutions of \eqref{var:1}
satisfy the Euler-Lagrange equation,
$$
\rho^{\frac1p}=\left[\alpha+\lm G[\rho] \right]_+, \quad G[\rho]=\ino G(x,y)\rho(y)dy,
$$
which is, putting $\psi=G[\rho]$, nothing but $\prl$. Then we have,
\begin{thm}\label{prmin}
Let $(\all,\pl)$ be a variational solution of {\rm $\prl$}. Then $\sg_1(\all,\pl)\geq 0$.
\end{thm}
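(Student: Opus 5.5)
The plan is to use the second variation of $\mathcal{F}_{\lm}$ at the minimizer $\rho=\rl$. Since $\rl$ minimizes on the convex set $\mathcal{P}$, for any admissible perturbation $\rho_t=\rl+t\eta$ with $\ino\eta=0$ and $\rho_t\geq 0$ for small $|t|$, we get $\frac{d^2}{dt^2}\mathcal{F}_{\lm}(\rho_t)|_{t=0}\geq 0$. To keep $\rho_t\geq 0$ and differentiable, I would take perturbations supported in $\ov{\om_+}$ of the form $\eta=\rlq\,h$, with $h$ bounded and $\ino\rlq h=0$. For $|t|$ small we then have $\rl+t\rlq h=\rlq(\rlq^{1/(p-1)}+th)$. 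This is not obviously nonnegative where $\rlq$ is small. I would therefore first work with $\eta=\rlq h\,\chi_{\{\rlq>\eps\}}$, corrected to have zero mean, and let $\eps\to0$ at the end.

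On $\{\rlq>\eps\}$ the map $\rho\mapsto\rho^{1+1/p}$ is smooth. The second variation is
$$
\frac{d^2}{dt^2}\mathcal{F}_{\lm}(\rho_t)\Big|_{t=0}=\frac1p\ino \rl^{\frac1p-1}\eta^2-\lm\ino\eta G[\eta].
$$
Using $\rl^{1/p}=\rlq^{1/(p-1)}$, we have $\rl^{1/p-1}=\rlq^{-1}$. With $\eta=\rlq h$ this gives the condition
$$
\ino \rlq h^2\ \geq\ \tl\ino \rlq h\,G[\rlq h],\qquad \forall h\ \mbox{with}\ \ino\rlq h=0,
$$
valid first for the truncated class and then, by density and the $\eps\to0$ limit, for all $h\in L^2(\om,\rlq\,dx)$ with $\langle h\rangle_{\ssl}=0$. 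The truncation limit needs only dominated convergence, since $\rlq$ is bounded and $G$ is a bounded operator on $L^2$. If the mass-correction term is taken proportional to $\rlq\chi_{\{\rlq>\eps\}}$, the condition is linear in $h$ and the limit is clean.

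Next I relate this to $\sg_1$ through the operator $T_{\ssl}$. Take an eigenfunction $\phi=\phi_1$ with eigenvalue $\mu_1$ of $T_{\ssl}$, and set $h=[\phi]_{\ssl}$, which has $\langle h\rangle_{\ssl}=0$. Then $G[\rlq h]=\frac{1}{\tl}T_{\ssl}\phi=\frac{\mu_1}{\tl}\phi$. Since $\ino\rlq h=0$, the right-hand side becomes $\tl\ino\rlq h\,G[\rlq h]=\mu_1\ino\rlq[\phi]_{\ssl}\phi=\mu_1\ino\rlq h^2$. The inequality therefore reads $\ino\rlq h^2\geq\mu_1\ino\rlq h^2$.

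It remains to check that $\ino\rlq h^2>0$. If it were zero, then $\rlq[\phi]_{\ssl}=0$, so $T_{\ssl}\phi=0$, which contradicts $\mu_1>0$. Hence $\mu_1\leq1$, and so $\sg_1=\tl(1/\mu_1-1)\geq0$. The case $\lm=0$ is trivial, since then $\tl=0$ and the operator is $-\Delta$.

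The main obstacle is the justification of the second variation near the free boundary, where $\rlq\to0$ and $\rl^{1/p-1}$ blows up. The truncation to $\{\rlq>\eps\}$ is designed to handle this. A second point to check is that $\rl$ is a genuine local minimizer among perturbations of both signs in $t$. This holds for the global minimizer as long as $\rl+t\eta\geq0$, which is guaranteed on the truncated set for $|t|$ small because $\rl\geq\eps^{p/(p-1)}$ there. If instead one uses the non-convex dual principle of \cite{Berestycki1980free} mentioned in the paper, the same computation should be run on the dual functional in $\psi$, and I expect it to lead to the same quadratic form.
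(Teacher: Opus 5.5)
Your proposal is correct and follows the paper's proof: you use nonnegativity of the second variation of $\mathcal{F}_{\lm}$ at the minimizer in directions $\rlq h$ with $\langle h\rangle_{\ssl}=0$, evaluated at $h=[\phi_1]_{\ssl}$, and you phrase the conclusion as $\mu_1\leq 1$ where the paper uses the identity $\mathcal{A}(\phi_1)/\ml=\langle[\phi_1]_{\ssl}^2\rangle_{\ssl}\,\sg_1/(\tl+\sg_1)$. Your truncation to $\{\rlq>\eps\}$ with a mass correction supplies the ``standard approximation argument'' that the paper only mentions, and your check that $\ino\rlq[\phi_1]_{\ssl}^2>0$ fills a step the paper leaves implicit.
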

\begin{proof} Recall that $\rlq=[\all+\lm\pl]_+^{p-1}$.
Let $\rl$ be any minimizer of the variational problem \eqref{var:1}, we consider variations of the form,
$$
f=\rlq \varphi,\quad  \varphi\in C^{2}(\,\ov{\om}\,),
$$
such that  $\ino \rlq \varphi=0$. Then the Taylor formula shows that,
$$
0\leq J_{\ssl}(\rl+\eps f)-J_{\ssl}(\rl)=\frac{\eps^2}{2}\mathcal{D}^{2}J_\lm(\rl)[f,f\,]+\mbox{\rm o}(\eps^2),
$$
where
$$
0\leq p\mathcal{D}^{2}J_\lm(\rl)[f,f\,]=\ino (\rl)^{\frac1p-1}f^2-p\lm\ino fG[f]=\ino \rlq\varphi^2-p\lm\ino \rlq \varphi \,G[\rlq \varphi]
$$
for any $\varphi\in C^{2}(\,\ov{\om}\,)$. Remark that in principle the term $\ino \rho^{1+\frac1p}$ is not of class $C^2$, however
the expansion can be made rigorous by a standard approximation argument. Therefore, we have
$$
\mathcal{A}(\phi):=\ino \rlq [\phi]_{\ssl}^2-\tl\ino \rlq  [\phi]_{\ssl} G[\rlq [\phi]_{\ssl}]\geq 0,\quad \forall\,\phi\in C^{1}_0(\ov{\om}).
$$
On the other side, letting $\phi_1$ be a first eigenfunction whose eigenvalue is $\sg_1$, we see that
\begin{align}
 0\leq & \dfrac{\mathcal{A}(\phi_1)}{\ml}=<[\phi_1]_{\ssl}^2>_{\ssl}-\tl<[\phi_1]_{\ssl} G[\rlq [\phi_1]_{\ssl}]>_{\ssl}=<[\phi_1]_{\ssl}^2>_{\ssl}-\frac{\tl}{\tl+\sg_1}<[\phi_1]_{\ssl}\phi_1>_{\ssl} \\
 =&<[\phi_1]_{\ssl}^2>_{\ssl}\frac{\sg_1}{\tl+\sg_1}
\end{align}
and then we immediately conclude that $\sg_1\geq 0$.
\end{proof}

\begin{rmk}\label{rem:var}
By the result in \cite{Berestycki1980free} there exists at least one variational solution of {\rm $\prl$} for any $\lm$.
    Thus, in view of the uniqueness of solutions (Theorem B),
    any solution of {\rm $\prl$} in~$\mathbb{D}_{N}$ is a variational solution.
\end{rmk}

\section{The proof of Theorem \ref{thm:lm*}: \texorpdfstring{$\lm_*(\mathbb{D}_N,p)=+\ii$.}{lambda*=infty} }
\label{sec:lm*}

For later purposes we introduce the weighted product,
\begin{align}
    \Abracket{\phi,\psi}_{\ssl}\coloneqq \frac{ \ino \rlq \phi\psi}{ \ino \rlq }, \qquad \forall \phi,\psi\in L^2(\Omega).
\end{align}
Note that~$\rlq$ is continuous, hence this weighted inner product is well-defined on~$L^2(\om)$.

\

The main ingredient is the following bending lemma of Crandall-Rabinowitz type. It is a refinement of an analogous result in
\cite{BJ2022uniqueness} for positive solutions.
\begin{prop}\label{prop:tranverse}
    Let~$(\all,\pl)$ be a solution of {\rm$\prl$} with $\lm>0$ and~$0\in \Spect(L_{\ssl})$, say~$\sg_k(\all,\pl)=0$.
    Suppose that~$\sg_k(\all,\pl)=0$ is simple with the (normalized) eigenfunction~$\phi_k\in C^{2,r}_0(\ov{\om}\,)$ satisfying~$\Abracket{\phi_k}_{\ssl}\neq 0$. Then,
    \begin{align}\label{2907.5}
                    \Abracket{[\phi_k]_{\ssl} \pl}_{\ssl}\neq 0\mbox{ and } \Abracket{[\phi_k]_{\ssl} \pl}_{\ssl} \mbox{   has the same sign as } \Abracket{\phi_k}_{\ssl},
    \end{align}
and there exist $\eps>0$, an open neighborhood $\mathcal{U}$ of $(\lm,\all,\pl)$ in $(0,+\ii)\times(0,1)\times C^{2,r}(\ov{\om})$ and a $C^1$ curve
    \begin{align}
        (-\eps,\eps) \to \mathcal{U}, \qquad s\mapsto (\lm(s), \al(s),\psi(s)),
    \end{align}
    such that
    \begin{itemize}
        \item $(\lm(0), \al(0),\psi(0))=(\lm,\all,\pl)$,
        \item $\Phi^{-1}(0,0)\cap\mathcal{U}= \braces{(\lm(s), \al(s),\psi(s)) \mid -\eps < s< \eps }$, where~$\Phi$ is given in~\eqref{eq:Phi}.
    \end{itemize}
    Furthermore, locally near the given solution~$(\all,\pl)$, we have~$\psi(s)=\pl+s\phi_k+\vxi(s)$, with
                \begin{align}\label{2907.0}
                    \Abracket{ [ \phi_k]_{\sscp \lm(s)},\vxi(s) }_{\sscp \lm(s)}=0,\quad s\in (-\eps,\eps)
                \end{align}
                and~$\xi(0)=0$, while
                \begin{align}\label{2907.1}
                    \vxi^{'}(0) \equiv 0, & & 
                    \al^{'}(0)=-\lm <\phi_k>_{\ssl}, & & 
                    \lm^{'}(0)=0,& & 
                    \psi^{'}(0)= \phi_k.
                \end{align}
                Moreover,
                $\sg_k(s)$ is simple for $s\in (-\eps,\eps)$ and we have
                \begin{align}\label{2907.10}
                    \sg_k(s)\ino (\mbox{\Large\emph{\textrho}}_{\! \sscp \lm(s)})^{\frac 1q} [\phi_k(s)]_{\sscp \lm(s)}\psi^{'}(s)=p\lm^{'}(s)\ino
	(\mbox{\Large\emph{\textrho}}_{\! \sscp \lm(s)})^{\frac 1q} [\phi_k(s)]_{\sscp \lm(s)}\psi(s).
                \end{align}
\end{prop}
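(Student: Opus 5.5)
The plan is to prove the sign relation \eqref{2907.5} first, and then apply the Crandall--Rabinowitz bifurcation-from-a-simple-eigenvalue argument (Lyapunov--Schmidt reduction) to $\Phi$.

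\textbf{Step 1: the sign relation \eqref{2907.5}.} Since $\sg_k=0$, the eigenfunction satisfies $-\Delta\phi_k=\tl\rlq[\phi_k]_{\ssl}$. Test this against $\pl$ and use $-\Delta\pl=\rl=\rlq(\all+\lm\pl)$ on $\om_+$:
$$
\tl\ino\rlq[\phi_k]_{\ssl}\pl=\ino\nabla\phi_k\cdot\nabla\pl=\ino\rl\phi_k=\ino\rlq(\all+\lm\pl)\phi_k .
$$
Write $\ino\rlq\phi_k=\ml\Abracket{\phi_k}_{\ssl}$ with $\ml=\ino\rlq$, and use $\ino\rlq[\phi_k]_{\ssl}=0$, so that $\ino\rlq\pl\phi_k=\ino\rlq\pl[\phi_k]_{\ssl}+\Abracket{\phi_k}_{\ssl}\ino\rlq\pl$. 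This gives
$$
\lm(p-1)\ino\rlq[\phi_k]_{\ssl}\pl=\Abracket{\phi_k}_{\ssl}\Big(\all\ml+\lm\ino\rlq\pl\Big)=\Abracket{\phi_k}_{\ssl}\ino\rl=\Abracket{\phi_k}_{\ssl}.
$$
Since $p>1$ and $\lm>0$, it follows that $\Abracket{[\phi_k]_{\ssl}\pl}_{\ssl}=\Abracket{\phi_k}_{\ssl}/(\lm(p-1)\ml)$. This quantity is nonzero and has the same sign as $\Abracket{\phi_k}_{\ssl}$.

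\textbf{Step 2: the reduced problem.} Set $F(\lm,\al,\psi)=\Phi$ near the given point. By Lemma \ref{lem:iso} and its proof, $D_{(\al,\psi)}\Phi$ is Fredholm of index zero. Its kernel is spanned by $(s,\phi)=(-\lm\Abracket{\phi_k}_{\ssl},\phi_k)$, because $s+\lm\phi=\lm[\phi_k]_{\ssl}$ solves both components; simplicity of $\sg_k$ makes the kernel one-dimensional. I then look for solutions of the form $\psi=\pl+s\phi_k+\xi$, $\al=\all-s\lm\Abracket{\phi_k}_{\ssl}+\beta$, with $\xi$ in the complement fixed by \eqref{2907.0}. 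The transversality needed to solve for $(\lm,\beta,\xi)$ as functions of $s$ via the implicit function theorem is that $D_\lm\Phi=(p\ino\rlq\pl,\,-p\rlq\pl)$ does not lie in the range of $D_{(\al,\psi)}\Phi$. The cokernel is detected by pairing with $\phi_k$ (self-adjointness). For a range element $(0,g)$ with $g=-\Delta\phi-p\rlq(\tilde s+\lm\phi)$ and zero first component one gets $\ino g\phi_k=0$. For $D_\lm\Phi$, correcting the first component with an $\al$-shift $c$ (choose $c=-\Abracket{\pl}_{\ssl}$), the pairing reduces to $-p\ino\rlq[\pl]_{\ssl}\phi_k=-p\ml\Abracket{[\phi_k]_{\ssl}\pl}_{\ssl}\neq0$ by Step 1. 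This is exactly the Crandall--Rabinowitz condition, adapted to the fact that $\lm$ enters nonlinearly. The implicit function theorem on the augmented map $(s,\lm,\beta,\xi)\mapsto\Phi$ then gives the $C^1$ curve and the local uniqueness statement, using that the neighborhood $\mathcal U$ is small. Note that $[\cdot]_+^p$ is $C^1$ for $p>1$, so the map is $C^1$.

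\textbf{Step 3: derivatives and the eigenvalue identity.} Differentiate $\Phi(\lm(s),\al(s),\psi(s))=0$ at $s=0$. The derivative $\psi'(0)=\phi_k+\xi'(0)$ must solve the kernel equation plus a $\lm'(0)$ term. Pairing with $\phi_k$ and using Step 1 forces $\lm'(0)=0$, after which $\xi'(0)\in\ker\cap\{\text{complement}\}=\{0\}$ and $\al'(0)=-\lm\Abracket{\phi_k}_{\ssl}$. For $s\neq0$ small, $\sg_k(s)$ stays simple by continuity of isolated simple eigenvalues of the compact operators $T_{\sscp\lm(s)}$. Finally, \eqref{2907.10} is obtained as follows. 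Differentiate the equation in $s$: $-\Delta\psi'=p\rho^{1/q}(\al'+\lm'\psi+\lm\psi')$. Test it with $\phi_k(s)$, test the eigen-equation of $\phi_k(s)$ with $\psi'$, subtract, and use the $s$-derivative of the constraint $\ino\rho^{1/q}(\al'+\lm'\psi+\lm\psi')=0$ to remove the $\al'$ term, as in Step 1. The main obstacle I expect is the bookkeeping of the nonlocal mean terms in this pairing, together with checking Fredholm and transversality when $\all<0$. In that case $\Eigen(T_{\ssl};0)$ is nontrivial, and one must verify via Lemma \ref{lem:spectral} that it contributes nothing to the kernel of $D\Phi$ at $\sg=0$.
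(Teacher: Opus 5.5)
Your proposal is correct and follows essentially the paper's route. You use the same identity $\lm(p-1)\ml\Abracket{[\phi_k]_{\ssl}\pl}_{\ssl}=\Abracket{\phi_k}_{\ssl}$ for \eqref{2907.5}, and the same implicit-function argument in $(s,\mu,\beta,\vxi)$, whose key step (the paper's Lemma~\ref{lem.crux}) is that pairing with $\phi_k$ detects the cokernel and $D_\lm\Phi$ pairs nontrivially with it by \eqref{2907.5}.

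There are two minor differences. First, you get solvability of $L_{\ssl}\phi=g$ with $\ino g\phi_k=0$ from $L_{\ssl}$ being a compact perturbation of $-\Delta$, so it is Fredholm of index zero for either sign of $\all$; your $\all<0$ worry is therefore not a real obstacle. The paper instead uses the splitting $H^1_0(\om)=\Eigen(T_{\ssl};0)\oplus\mathcal{H}_1$ and the Fredholm alternative. Second, you derive \eqref{2907.1} and \eqref{2907.10} directly, where the paper cites \cite{BJ2022uniqueness}.
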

\begin{proof}
First of all, we prove that, since $\phi_k$ is a solution of \eqref{lineq0.1}, we have,
\begin{align}\label{2907.6n}
\frac{1}{\ml}<\phi_k>_{\ssl}\equiv(\all+\lm<\pl>_{\ssl})<\phi_k>_{\ssl}=(\lm(p-1)+\sg_k)<\pl [\phi_k]_{\ssl}>_{\ssl}.
\end{align}
\noi The left hand side equality in \eqref{2907.6n} is an immediate consequence of the following identity,
$$
(\all+\lm<\pl>_{\ssl})= \frac{1}{\ml}\left(\ino \rlq (\all+\lm\pl)\right)=\frac{1}{\ml}\left(\ino \rlq [\all+\lm\pl]_+\right)\equiv\frac{1}{\ml},
$$
where we recall that $\ml=\ino \rlq$. Therefore, we just need to prove the second equality.
By assumption $\phi_k$ satisfies $-\Delta \phi = (\tl+\sg_k)\rlq [\phi]_{\ssl}$,
which we multiply by $\pl$ and integrate by parts to obtain,
$$
\ino \rl\phi_k=(\tl+\sg_k)\ino \rlq \pl[\phi_k]_{\ssl}.
$$
\noi Dividing by $\ml$ and again since $\rl=\rlq[\all+\lm\pl]_+$ we find that,

$$
\frac{1}{\ml}\ino \rl\phi_k=\frac{1}{\ml}\ino\rlq[\all+\lm\pl]_+\phi_k= \all<\phi_k>_{\ssl}+\lm <\pl\phi_k>_{\ssl}=(\tl+\sg_k) <\pl[\phi_k]_{\ssl}>_{\ssl}.
$$
\noi The conclusion immediately follows by observing that
$$<\pl\phi_k>_{\ssl}=<\pl[\phi_k]>_{\ssl}+<\pl>_{\ssl}<\phi_k>_{\ssl},$$
which proves \eqref{2907.6n}. Therefore, since $<\phi_k>_{\ssl}\neq 0$ by assumption, we infer from \eqref{2907.6n} that \eqref{2907.5} is satisfied.\\

\noi Take $\eps>0$ small,
$\dt_i=\dt_i(\eps)>0$, $i=1,2,3$, and consider the vector
$$
(\lm+\mu,\all+\beta,\pl+s\phi_k+\vxi),
$$
where
$$
(s,\mu,\beta,\vxi)\in (-\eps,\eps)\times (-\dt_1(\eps),\dt_1(\eps))\times (-\dt_2(\eps),\dt_2(\eps))\times B_{\delta_3(\eps)}^{k,\perp},
$$
$$
B_{\delta_3(\eps)}^{k,\perp}=\left\{\vxi\in C^{2,r}_0(\ov{\om}\,)\,:\,<[\phi_k]_{\ssl},\,\vxi>_{\ssl}=0,\;\|\vxi\|_{C^{2,r}_0(\ov{\om}\,)}< \dt_3(\eps)\right\},
$$

\noi Next, let us introduce the map,
$$
\Phi_0: (-\eps,\eps)\times (-\dt_1(\eps),\dt_1(\eps))\times (-\dt_2(\eps),\dt_2(\eps))\times B_{\delta_3(\eps)}^{k,\perp} \to \R \times  C^{r}(\ov{\om}\,),
$$
to be defined as follows,
\begin{align}
\Phi_0(s,\mu,\beta,\vxi)=&\Phi(\lm+\mu,\all+\beta,\pl+s\phi_k+\vxi) \\
=&\left(\begin{array}{cl} -1+\ino [\all+\beta +(\lm+\mu)(\pl+s\phi_k+\vxi)]_+^p \\
\\
 -\Delta (\pl+s\phi_k+\vxi)-[\all+\beta +(\lm+\mu)(\pl+s\phi_k+\vxi)]_+^p
\end{array}\right).
\end{align}
For $p>1$, $\Phi_0$ is locally of class $C^1$ near $(s,\mu,\beta,\vxi)=(0,0,0,0)$ and
$$
\Phi_0(0,0,0,0)=\Phi(\lm,\all,\pl)=(0,0).
$$
Let us denote
$$
X_{k}^{\perp}=\left\{\phi\in C^{2,r}_0(\ov{\om}\,)\,:\,<\phi_k,\phi>_{H_0^1(\om)}=\frac{\tl}{\mu_k}< [\phi_k]_{\ssl},\,\phi>_{\ssl}=0\right\}.
$$
Then the differential of $\Phi_0$ with respect to~$(\mu,\beta,\vxi)$ at $(0,0,0,0)$ acts on a triple $(s_\mu, s_\be,\phi)\in \R\times\R\times X_{k}^{\perp}$ as follows,
$$
D_{(\mu,\beta,\vxi)}\Phi_0(0,0,0,0)[s_\mu,s_\be,\phi]=
\left(\begin{array}{cl}
\tl \ino \rlq \phi +p s_{\mu}\ino \rlq \pl +p s_{\be}\ino \rlq \\ \\
D_\psi \Phi_2(\lm,\all,\pl)[\phi]-p\rlq \pl s_\mu-p\rlq s_\be
\end{array}\right),
$$
with $\tl=p\lm$ and $D_\psi \Phi_2(\lm,\all,\pl)[\phi]=-\Delta \phi -\tl \rlq \phi$.
The crux of the argument is to prove the following:
\begin{lemma}\label{lem.crux}
$D_{(\mu,\beta,\vxi)}\Phi_0(0,0,0,0)$ is an isomorphism of $\R\times\R\times X_{k}^{\perp}$ onto $\R\times C^{r}(\ov{\om}\,)$.
\end{lemma}
\begin{proof}
We will prove that for each $(t,f)\in \R\times C^{r}(\ov{\om}\,)$, the vectorial equation
$$
D_{(\mu,\beta,\vxi)}\Phi_0(0,0,0,0)[s_\mu,s_\be,\phi]=\left(\begin{array}{cl}t \\   f\end{array}\right),
$$
admits a unique solution $(s_\mu,s_\be,\phi)\in \R\times\R\times X_{k}^{\perp}$. From the first equation we deduce that,
$$
s_{\be}=\frac{t }{p\ml}-<\pl>_{\ssl} s_\mu -\lm  <\phi>_{\ssl},
$$
where we recall that $\ml=\ino\rlq$.
Therefore, by substituting into the second equation, we find that the pair $(s_\mu,\phi)$ must solve the equation,
\begin{align}\label{26.7.1}
-\Delta \phi-\tl \rlq [\phi]_{\ssl}-p\rlq [\pl]_{\ssl} s_\mu = f+ t \frac{\rlq}{\ml}.
\end{align}

Let us write $C^r(\ov{\om}\,)=Y_k \oplus R$, where $Y_k =\mbox{Span}\{\phi_k\}$ and
$$
R=\left\{\phi\in C^{r}(\ov{\om}\,)\,:\,<\phi_k,\,\phi>_{L^2}=\ino \phi_k\, \phi=0 \right\},
$$
where in the image space $C^{r}(\ov{\om}\,)\subset L^2(\om)$ we use the standard scalar product $<f,g>_{L^2}=\ino f\, g$.\\
\noi Multiplying \rife{26.7.1} by $\phi_k$ and integrating we find that,
\begin{align}\label{26.7.2}
-m_{\ssl}p <[\pl]_{\ssl}, \phi_k>_{\ssl}s_\mu = \ino f\phi_k+t <\phi_k>_{\ssl},
\end{align}
where we used the fact that, since $\phi_k$ satisfies \eqref{lineq0.1} with $\sg_k=0$ (or using that~$\phi\in X_{k}^{\perp}$), then
\begin{align}\label{26.7.2-26}
<\phi_k,\, -\Delta \phi-\tl \rlq [\phi]_{\ssl}>_{L^2}
=&\ino \phi_k(-\Delta \phi-\tl \rlq [\phi]_{\ssl}) \\
=&\ino (-\Delta \phi_k-\tl \rlq [\phi_k]_{\ssl})\phi
=\sg_k\ino  \rlq [\phi_k]_{\ssl} \phi =0.
\end{align}
Thus, we see from \eqref{2907.5} that \eqref{26.7.2} admits a unique solution $s_\mu=s_\mu(t,f)$, such that \eqref{26.7.2} is satisfied,
that is
\begin{align}\label{26.7.2-26.3}
\hspace{-1cm}<\phi_k,\,p\rlq [\pl]_{\ssl} s_\mu(t,f) + f+ t \frac{\rlq}{\ml}>_{R}=
\ino \phi_k (p\rlq [\pl]_{\ssl} s_\mu(t,f) + f+ t \frac{\rlq}{\ml})=0,
\end{align}
and we are left with showing that the projection of \eqref{26.7.1} onto $R$ in the sense of the scalar product $<\cdot,\cdot>_{L^2}$ admits
a unique solution in $X_k^{\perp}$.
Observe that, again in view of \eqref{26.7.2-26} and \eqref{26.7.2-26.3},
the projection of \eqref{26.7.1} onto $R$ takes the form
\begin{align}\label{eq:IFT eqn}
L_\lm(\phi)=-\Delta \phi-\tl \rlq [\phi]_{\ssl}= g,
\end{align}
with
$$
g:=P_R\left(p\rlq [\pl]_{\ssl} s_\mu(t,f)+f+  t \frac{\rlq}{\ml}\right)
=p\rlq [\pl]_{\ssl}s_\mu(t,f)+f+  t \frac{\rlq}{\ml},
$$
where $P_R$ denotes the projection operator.
It remains to show that~\eqref{eq:IFT eqn} admits a unique solution~$\phi$ in~$X_k^\perp$.

Note that~$\ker(L_\lm)=\Eigen(L_\lm;0)=\Eigen(T_\lm;1)$.
Applying the Green function to both sides of the equation, we get
\begin{align}
 \phi-T_\lm(\phi)=G* g.
\end{align}
If~$\all> 0$ this is uniquely solvable as shown in~\cite{BJ2022uniqueness}, 
while if~$\all<0$ the situation is different, since $\Eigen(L_\lm;0)$ is not empty.
We discuss this case hereafter. It will be clear from the proof that this argument works fine 
for the borderline case~$\all=0$ as well.\\

Decompose both sides according to~$H^1_0(\Omega)=\Eigen(T_\lm;0)\oplus\mathcal{H}_1$, say
\begin{align}
 \phi=P_0 \phi+ P_1\phi = \varphi_0 + \varphi_1, & &
 G*g=P_0(G*g) + P_1(G*g).
\end{align}
Then the equation reduces to
\begin{align}
 \varphi_0 = P_0(G*g), & &
 (I-T_\lm)\varphi_1= P_1(G*g).
\end{align}
Thus~$\varphi_0\in X_k^\perp$ can be directly taken as~$P_0(G*g)$, meanwhile for~$\varphi_1\in X_k^\perp$ we need to be careful: the second equation above is solvable (necessarily uniquely) iff~$P_1(G*g)\in X_k^\perp\cap \mathcal{H}_1$. Note that 
\begin{align}
  <\phi_k, P_1(G*g)>_{H_0^1} 
  =& <P_1(\phi_k), G*g>_{H_0^1} = <\phi_k, G*g>_{H_0^1} \\
  =&\int\limits_\Omega \nabla\phi_k\cdot \nabla (G*g) =\int\limits_\Omega \phi_k (-\Delta(G*g)) 
  =\int\limits_\Omega \phi_k g
\end{align}
which vanishes due to~\eqref{26.7.2-26.3}.
Hence~\eqref{eq:IFT eqn} is uniquely solvable by the Fredholm Alternative.
\end{proof}

In view of Lemma \ref{lem.crux}, by using standard results about the regularity of branches of simple eigenvalues (see for example Proposition 3.6.1. in \cite{BuffoniToland2003analytic}) the rest of the proof is exactly the same as that worked out in \cite{BJ2022uniqueness} and we refer to that paper for further details.
\end{proof}

\begin{proof}[Proof of Theorem~\ref{thm:lm*}]
Recall that~${\mathcal{G}}_{*}$ is the set of solutions for~$\lm\in [0,\lm_*(\mathbb{D}_{_N},p))$, forming a~$C^1$ curve, along which by Theorem C we have
\begin{align}
    \sg_1(\all,\pl)>0, & & \frac{\dd \el}{\dd\lm}>0, & & \frac{\dd\all}{\dd\lm}<0.
\end{align}
Recall also that by Theorem 1.4 in \cite{BJWW2} we already know that $\lm_*(\mathbb{D}_{N},p)\geq \lm_+(\mathbb{D}_{N},p)$.
We argue by contradiction and assume that $\lm_*(\mathbb{D}_{N},p)<+\ii.$

To simplify the exposition we set $\lm_+=\lm_+(\mathbb{D}_{N},p)$ and $\lm_*=\lm_*(\mathbb{D}_{N},p)$.
By Lemma \ref{lemE1}, along a subsequence we can pass to the limit as $\lm_n\to\lm_*(\mathbb{D}_{N},p)$ and deduce that $(\al_n,\psi_n)$
converges in $C^2$ to a solution~$(\al_*,\psi_*) $ of $\prl$ for $\lm=\lm_*$. Remark that
by Theorem B we have $\frac{d \all}{d\lm}<0$ and by assumption $\lm_+\leq\lm_*$, whence by the uniqueness of solutions (Theorem B)
we must have $\all\left.\right|_{\lm=\lm_+}=0$ and consequently $\al_*\leq 0$.

By Remark \ref{rem:var} we see that~$(\al_*,\psi_*)$ is a variational solution and hence by Proposition \ref{prmin} we have $\sg_1(\al_*,\psi_*)\geq 0$.
Then necessarily $\sg_1(\al_*,\psi_*)=0$, as otherwise by Lemma \ref{lem1.1} and the continuity of eigenvalues we would have a
contradiction to the definition of $\lm_*$.
As a consequence, by Proposition~\ref{prop:simple}, we see that the transversality condition needed to apply Proposition
\ref{prop:tranverse} is satisfied, whence we can continue $\mathcal{G}_{*}$ to a $C^1$
parametrization without bifurcation points,
$$
\mathcal{G}_{\lm_*+\eps}=\left\{(-\eps,\eps)\ni s\mapsto (\lm(s),\al(s),\psi(s))\right\},
$$
where, for some $\eps>0$, we have that for any
$s\in (-\eps,\eps)$, $(\al(s),\psi(s))$ is a solution of $\prl$ with $\lm=\lm(s)$ and $(\lm(s),\al(s),\psi(s))\in\mathcal{G}_{*}$ for $s\leq 0$.
Recall from \eqref{2907.1} that $\lm^{'}(0)=0$.

We \emph{claim} that there exists a sequence $s_n\to 0^+$ such that
{$\lm^{'}(s_n)>0$} and the curve $(\lm(s),\al(s),\psi(s))$ bends to the right of $\lm_*$.\\
\noi Indeed, in view of \rife{2907.5} in Proposition \ref{prop:tranverse}, we have,
\begin{align}
    \Abracket{[\phi_1]_{\sscp \lm_*},\psi_*}_{\sscp \lm_*}\neq 0
\mbox{ and } \Abracket{ [\phi_1]_{\sscp \lm_*},\psi_*}_{\sscp \lm_*} \mbox{  has the same sign as }
<\phi_1>_{\sscp \lm_*}\!.
\end{align}
Moreover, again by Proposition \ref{prop:tranverse}, putting $\sg_1(s)=\sg_1(\al(s),\psi(s))$, we have
\begin{align*}
\sg_1(s)\ino (\mbox{\em \Large\emph{\textrho}}_{\! \sscp \lm(s)})^{\frac 1q} [\phi_1(s)]_{\sscp \lm(s)}\psi^{'}(s)=p\lm^{'}(s)\ino
(\mbox{\em \Large\emph{\textrho}}_{\! \sscp \lm(s)})^{\frac 1q} [\phi_1(s)]_{\sscp \lm(s)}\psi(s),
\end{align*}
For $s$ small and negative we have $(\lm(s),\al(s),\psi(s))\in\mathcal{G}_{*}$ whence in particular $\sg_1(s)>0$ and $\lm^{'}(s)>0$, implying that (recall \eqref{2907.1}), since
$\ino (\mbox{\em \Large\emph{\textrho}}_{\! \sscp \lm(s)})^{\frac 1q} [\phi_1(s)]_{\sscp \lm(s)}\psi^{'}(s)=o(1)+\ino (\mbox{\em \Large\emph{\textrho}}_{\! \sscp \lm_*})^{\frac 1q} [\phi_1]^2_{\sscp \lm_*}$, we have
$\Abracket{[\phi_1]_{\sscp \lm_*},\psi_*}_{\sscp \lm_*}>0$. Therefore, we infer for $s$ small and positive that (recall that we have $\sg_1(s)\geq 0$)
$\lm^{'}(s)\geq 0$ and consequently by the mean value theorem
$$
\lm(s)-\lm_*=\lm(s)-\lm(0)=\lm^{'}(t)s\geq 0,
$$
for some $t\in (0,s)$. However we cannot have $\lm(s)=\lm_*$, because this would contradict the uniqueness of solutions at $\lm=\lm_*$.
Therefore $\lm(s)>\lm_*$ for $s$ small and positive, that is, the curve $(\lm(s),\al(s),\psi(s))$ bends to the right of $\lm_*$. In particular
we infer that there exists a sequence $s_n\to 0^+$ such that $\lm^{'}(s_n)>0$ for any $n$, as claimed.

Thus, with the notations of Lemma \ref{lem:al}, putting $\lm_n=\lm(s_n)$, and since $\lm^{'}(s_n)\to 0^+$, we have,
\begin{align}\label{flexal}
    \lim\limits_{n \to +\ii}\left.\frac{d \all}{d\lm}\right|_{\lm=\lm_n}
    =& -\lim\limits_{n \to +\ii}\left.\left(\Abracket{\pl}_{\ssl}+\lm\Abracket{\frac{d \pl}{d\lm}}_{\ssl}\right)\right|_{\lm=\lm_n} \\
    =& \mbox{O}(1)-\lim\limits_{n \to +\ii}\left.\lm\Abracket{\frac{d \pl}{d\lm}}_{\ssl}\right|_{\lm=\lm_n}\\
    =& \mbox{O}(1)-\lm_*\lim\limits_{n\to +\ii}\frac{1}{\lm^{'}(s_n)}\Abracket{\psi^{'}(s_n)}_{\sscp \lm_n} \\
    =& \mbox{O}(1)-\lm_*\lim\limits_{n\to +\ii}\frac{1}{\lm^{'}(s_n)}\left(\Abracket{\phi_1}_{\lm_*}+\mbox{o}(1)\right)=-\ii.
\end{align}
This is impossible, as from Lemma \ref{lem:al} we have that~$\all$ is a global~$C^2$ function of~$\lm\in(0,+\infty)$, which is the desired contradiction.
\end{proof}

\section{On the uniqueness of solutions of the Emden problem}\label{sec:Emden}
In this section we obtain two results related to the uniqueness of solutions of the Emden equation. The first one is a new proof of the uniqueness on $\mathbb{D}_N$ which comes as an immediate consequence of Theorem B. The second one is a uniqueness result which follows from some assumption about
$\lm_*(\om,p)$.
\begin{thm} Let $p\in (1,p_N)$, then \eqref{Emden0} admits a unique solution.
\end{thm}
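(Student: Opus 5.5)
The plan is to deduce uniqueness for \eqref{Emden0} from the uniqueness statement of Theorem B, by scaling any Emden solution into a solution of $\prl$ on $\mathbb{D}_N$ with $\al=0$. Since solutions of $\prl$ on $\mathbb{D}_N$ are unique for every $\lm\geq 0$, and $\all$ is strictly decreasing with $\all=0$ exactly at $\lm=\lm_+(\mathbb{D}_N,p)$, the problem with $\al=0$ has exactly one solution. The scaling map should then be injective, which forces uniqueness of $u_0$.

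\emph{Step 1 (scaling).} Let $u$ be any solution of \eqref{Emden0}. Set $\psi(x)=a\,u(x/R_N)$ for $x\in\mathbb{D}_N=B_{R_N}$, with $a>0$ and $\lm>0$ to be fixed. We have $-\Delta\psi=aR_N^{-2}u^p(x/R_N)$. We want this to equal $[\lm\psi]_+^p=\lm^pa^pu^p(x/R_N)$, which gives the condition $aR_N^{-2}=\lm^pa^p$. The mass constraint reads
\[
\int_{\mathbb{D}_N}\lm^pa^pu^p(x/R_N)\,dx=\lm^pa^pR_N^NI(u)=1,
\qquad I(u)=\int_{B_1}u^p .
\]
These two equations determine $a$ and $\lm$ uniquely in terms of $I(u)$. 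Explicitly, $a=R_N^{2-N}/I(u)$, and then $\lm^p=a^{1-p}R_N^{-2}$. Up to bookkeeping of exponents, this reproduces \eqref{lmpN} with $I_p$ replaced by $I(u)$. So $(0,\psi)$ solves $\prl$ with $\al=0$ and this $\lm=\lm(u)$.

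\emph{Step 2 (identify $\lm$).} By Theorem B, $\all>0$ iff $\lm<\lm_+$, and $\lm\mapsto\all$ is continuous and strictly decreasing. Hence the unique solution at $\lm(u)$ has $\al=0$ only if $\lm(u)=\lm_+$. So every Emden solution $u$ yields $\lm(u)=\lm_+(\mathbb{D}_N,p)$. Since $\lm$ is a strictly monotone function of $I(u)$ (here $p>1$ is used), $I(u)$ is the same for all Emden solutions, and therefore so is $a$.

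\emph{Step 3 (conclude).} Two Emden solutions $u,v$ give solutions $(0,\psi_u)$, $(0,\psi_v)$ of $\prl$ at the same $\lm=\lm_+$. By the uniqueness in Theorem B, $\psi_u=\psi_v$. Since $a$ is the same for both, $u=v$.

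The main obstacle is only the exponent bookkeeping in Step 1. One has to check that $a$ and $\lm$ are uniquely solvable and strictly monotone in $I(u)$, which holds because $p\neq1$.
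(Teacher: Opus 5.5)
Your proposal is correct and is essentially the paper's argument: rescale any Emden solution into a solution $(0,\psi)$ of $\prl$ on $\mathbb{D}_N$, use the strict monotonicity of $\al_\lm$ from Theorem B to force $\lm=\lm_+$, and then use uniqueness in Theorem B together with the fact that the scaling is determined by $\int u^p$. The paper also records existence, by inverting the scaling on the solution $(\al_{\lm_+},\psi_{\lm_+})=(0,\psi_{\lm_+})$ given by Theorem B; you should add that one line, since the statement asserts existence as well as uniqueness.
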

\begin{proof}
Here $\lm_+=\lm_+(\mathbb{D}_N,p)$. After a suitable scaling it is enough to prove the uniqueness of solutions of \eqref{Emden0} with $B_1\subset \mathbb{R}^N$
replaced by $\mathbb{D}_N$, say \eqref{Emden0}$_N$. Let $(\al_+,\psi_+)$ be the unique solution of $\prl$ on $\mathbb{D}_N$ for $\lm=\lm_+$.
By Theorem B we have $\al_+=0$ and then obviously $u_+=\lm_+^{\frac{p}{p-1}}\psi_+$ is a solution of \eqref{Emden0}$_N$ satisfying
$\int_{\mathbb{D}_N} u^p_+=\lm_+^{\frac{p}{p-1}}$.
If there were any other solution $v$ of \eqref{Emden0}$_N$ distinct from $u_+$, then, defining $\lm_v$ as follows,
$\lm_v^{\frac{p}{p-1}}=\int_{\mathbb{D}_N} v^p$, we would have a solution $(\al_v,\psi_v)$ of $\prl$ with $\al_v=0$ and $\psi_v=\lm_v^{-\frac{p}{p-1}}v$.
If $\lm_v\neq \lm_+$ we have a contradiction to $\frac{d\all}{d\lm}<0$ as stated in Theorem B, while if $\lm_v= \lm_+$ we have a contradiction to the uniqueness.
\end{proof}

\begin{thm} Let $p\in (1,p_N)$, and assume that either $\lm_*(\om,p)=+\ii$ or $\lm_*(\om,p)>\lm_+(\om,p)$ and $\all<0$ for $\lm\geq \lm_*(\om,p)$.
Then the Emden problem
\begin{align*}
    \graf{-\Delta u =u^p\quad \mbox{in}\;\;\om\\
u>0 \quad \mbox{in}\;\;\om \\
u=0 \quad \mbox{on}\;\;\pa \om,
}
\end{align*}
admits a unique solution.
\end{thm}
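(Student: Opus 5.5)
The plan is to reproduce the argument just given for $\mathbb{D}_N$, with Theorem C taking the place of Theorem B. Any solution $u$ of the Emden problem on $\om$ can be rescaled into a solution of $\prl$ with $\al=0$. Define $\lm_u$ by $\lm_u^{\frac{p}{p-1}}=\ino u^p$ and set $\psi_u=\lm_u^{-\frac{p}{p-1}}u$. Then
$$
-\Delta\psi_u=\lm_u^{-\frac{p}{p-1}}u^p=[\lm_u\psi_u]_+^p,
\qquad
\ino [\lm_u\psi_u]_+^p=\lm_u^{p}\lm_u^{-\frac{p^2}{p-1}}\ino u^p=1 ,
$$
so $(0,\psi_u)$ solves $\prl$ with $\lm=\lm_u$. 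Conversely, every solution of $\prl$ with $\all=0$ gives a solution $u=\lm^{\frac{p}{p-1}}\pl$ of the Emden problem. Uniqueness of the Emden problem is therefore equivalent to the following: there is exactly one pair $(\lm,\pl)$ solving $\prl$ with $\all=0$.

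\emph{Case $\lm_*(\om,p)=+\ii$.} By Theorem C, for every $\lm\ge0$ the solution of $\prl$ is unique and lies on the $C^1$ curve $\mathcal{G}_*(\om)$, along which $\frac{d\all}{d\lm}<0$. So $\lm\mapsto\all$ is a strictly decreasing function defined on all of $[0,+\ii)$, with $\al_0=1$. By definition of $\lm_+(\om,p)$ (finite, by the Remark), there are solutions with $\all\le0$ for $\lm$ arbitrarily close to $\lm_+$ from above. Strict monotonicity and continuity then give exactly one $\lm$ with $\all=0$, namely $\lm=\lm_+(\om,p)$, and at that $\lm$ the solution $\pl$ is unique. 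Hence the Emden solution is unique.

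\emph{Case $\lm_*(\om,p)>\lm_+(\om,p)$ and $\all<0$ for $\lm\ge\lm_*$.}
\begin{itemize}
\item For $\lm\ge\lm_*$, every solution of $\prl$ has $\all<0$ by hypothesis, so no Emden solution arises there.
\item For $\lm\in[0,\lm_*)$, Theorem C again gives uniqueness and strict monotonicity of $\all$. Hence $\all=0$ happens for at most one $\lm$, and it does happen at $\lm_+<\lm_*$. Indeed, $\all>0$ for $\lm<\lm_+$, while by the definition of $\lm_+$ and continuity along $\mathcal{G}_*$ we get $\al_{\lm_+}\le0$. Combined with $\al_{\lm_+}\ge0$ (limit of positive values), this gives $\al_{\lm_+}=0$.
\end{itemize}
In both cases the zero set of $\al$ among all solutions of $\prl$ is a single point with a single $\psi$, which proves uniqueness.

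\emph{Main obstacle.} The delicate point is the interpretation of the hypothesis ``$\all<0$ for $\lm\ge\lm_*$''. It must mean that \emph{every} solution of $\prl$ with $\lm\ge\lm_*$ satisfies $\all<0$, since beyond $\lm_*$ Theorem C no longer guarantees uniqueness. The existence of a zero of $\all$ also needs care. Existence of the Emden solution is classical, so only uniqueness is at stake, but the identification of the zero as exactly $\lm_+$ relies on the continuity of $\all$ along $\mathcal{G}_*$, which Theorem C supplies.
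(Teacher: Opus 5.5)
Your proposal is correct and follows essentially the same route as the paper. You rescale any Emden solution into a solution of $\prl$ with $\al=0$, then use the uniqueness and strict monotonicity of $\all$ from Theorem C (plus the hypothesis $\all<0$ for $\lm\geq\lm_*$ in the second case) to see that $\al$ vanishes only at $\lm=\lm_+$, where the solution is unique; your explicit derivation of $\al_{\lm_+}=0$ from the definition of $\lm_+$ and continuity along $\mathcal{G}_*$ fills in a step the paper simply asserts.
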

\begin{proof} Here $\lm_+=\lm_+(\om,p)$, $\lm_*=\lm_*(\om,p)$. Assume first that $\lm_*(\om,p)=+\ii$, then by Theorem C we have that,
for any $\lm\geq 0$, $\prl$ admits a unique solution and the set of solutions is a $C^1$ curve $\lm \to (\all,\pl)$ such that in particular $\frac{d\all}{d\lm}<0$ for any $\lm\geq 0$. Let $(\al_+,\psi_+)$ be the unique solution of $\prl$ on $\om$ for $\lm=\lm_+$, which therefore satisfies
$\al_+=0$. Clearly $\all\neq 0$ for $\lm\neq \lm_+$. Let $u_+=\lm_+^{\frac{p}{p-1}}\psi_+$ be the solution satisfying
$\int_{\om} u^p_+=\lm_+^{\frac{p}{p-1}}$. If there were any other solution $v$ distinct from $u_+$, then, defining $\lm_v$ as follows,
$\lm_v^{\frac{p}{p-1}}=\int_{\om} v^p$, we would have a solution $(\al_v,\psi_v)$ of $\prl$ with $\al_v=0$ and $\psi_v=\lm_v^{-\frac{p}{p-1}}v$.
If $\lm_v\neq \lm_+$ we have a contradiction to $\all\neq 0$ for $\lm\neq \lm_+$, while if $\lm_v= \lm_+$ we have a contradiction to the uniqueness.\\
Assume now that $\lm_*(\om,p)>\lm_+(\om,p)$ and $\all<0$ for $\lm\geq \lm_*(\om,p)$. The argument above works fine in this case as well, the only difference
being that $\all\neq 0$ for $\lm\neq \lm_+$ is deduced from $\frac{d\all}{d\lm}<0$ for $\lm<\lm_*$ and by the assumption $\all<0$ for $\lm\geq \lm_*(\om,p)$ otherwise.
\end{proof}

\bibliographystyle{siam}
\bibliography{Plasma}

\end{document}